\newsavebox{\@brx}
\newcommand{\llangle}[1][]{\savebox{\@brx}{\(\m@th{#1\langle}\)}%
  \mathopen{\copy\@brx\mkern2mu\kern-0.9\wd\@brx\usebox{\@brx}}}
\newcommand{\rrangle}[1][]{\savebox{\@brx}{\(\m@th{#1\rangle}\)}%
  \mathclose{\copy\@brx\mkern2mu\kern-0.9\wd\@brx\usebox{\@brx}}}
\newtheorem{theorem}{Theorem}[section]
\newtheorem{lemma}[theorem]{Lemma}
\newtheorem{proposition}[theorem]{Proposition}
\newtheorem{definition}[theorem]{Definition}
\newtheorem{fact}[theorem]{Fact}
\newtheorem{question}[theorem]{Question}
\renewcommand{\leq}{\leqslant}
\renewcommand{\geq}{\geqslant}
\long\def\@savemarbox#1#2{\global\setbox#1\vtop{\hsize\marginparwidth 
  \@parboxrestore\tiny\raggedright #2}}
\renewcommand*\env@matrix[1][\arraystretch]{%
\edef\arraystretch{#1}%
\hskip -\arraycolsep
\let\@ifnextchar\new@ifnextchar
\array{*\c@MaxMatrixCols c}}
\date{\today}
\title{}
\date{\today}
\author{Konstantinos Tsouvalas}
\begin{document}
\title{Fiber products of rank 1 superrigid lattices and quasi-isometric embeddings}
\frenchspacing 

\maketitle

\begin{abstract} Let $\Delta$ be a cocompact lattice in $\mathsf{Sp}(m,1)$, $m \geq 2$, or $\textup{F}_4^{(-20)}$. We exhibit examples of finitely generated subgroups of $\Delta \times \Delta$ with positive first Betti number all of whose discrete faithful representations into any real semisimple Lie group are quasi-isometric embeddings. The examples of this paper are inspired by the counterexamples of Bass--Lubotzky \cite{Bass-Lubotzky} to Platonov's conjecture. \end{abstract}
\section{Introduction}
\par Let $\Gamma$ be an irreducible lattice of a real algebraic semisimple Lie group $G$. If $\Gamma$ is cocompact, then the inclusion of $\Gamma$ in $G$ is a quasi-isometric embedding. More precisely, if we equip the symmetric space $G/K$ associated to $G$ with the left invariant Riemannian distance $d_{G/K}$ induced by the Killing metric, and identify (via the orbit map) $\Gamma$ as a subset of $G/K$, then $d_{G/K}$ restricted on $\Gamma$ is coarsely equivalent with any left invariant word metric on $\Gamma$ induced by a finite generating subset. If $\Gamma$ is not cocompact, then this might not be the case (e.g. for $G = \mathsf{SL}_2(\mathbb{R})$ and $\Gamma = \mathsf{SL}_2(\mathbb{Z})$). However, Lubotzky--Mozes--Raghunathan \cite{LMR} proved\footnote{More generally, the main result in \cite{LMR} applies to irreducible lattices in products $G_1(k_1)\times \cdots \times G_{\ell}(k_{\ell})$ of connected simple groups $G_i$ defined over a local field $k_i$.} that this is the case when the real rank of $G$ is at least $2$.
\par The superrigidity theorem of Margulis \cite[Thm. VII 5.6]{Mar} imposes severe restrictions on linear representations of the lattice $\Gamma$ over any local field. Following \cite{FH}, we say that a representation $\rho:\Gamma \rightarrow \mathsf{SL}_d(\mathbb{R})$ {\em almost extends to a continuous representation of $G$} if there exists a representation \hbox{$\hat{\rho}: G \rightarrow \mathsf{SL}_{d}(\mathbb{R})$} and a representation $\rho':\Gamma \rightarrow \mathsf{SL}_{d}(\mathbb{R})$ with compact closure such that the images of $\hat{\rho}(\Gamma)$ and $\rho'(\Gamma)$ commute and $\rho(\gamma)=\hat{\rho}(\gamma)\rho'(\gamma)$ for every $\gamma \in \Gamma$. Margulis' superrigidity theorem (see \cite[Thm. 3.8]{FH}) implies that every linear representation $\rho$ of the lattice $\Gamma$ almost extends to a continuous representation of $G$. In particular, if $G$ is simple and the image of $\rho$ has non-compact closure, then $\hat{\rho}$ has necessarily finite kernel and hence $\rho$ is a quasi-isometric embedding (see Definition \ref{qie}) since $\Gamma$ is quasi-isometrically embedded in $G$ by \cite{LMR}.
\par The supperrigidity of lattices in the quaternionic Lie group $\mathsf{Sp}(m,1)$, $m \geq 2$, or the rank 1 Lie group $\textup{F}_4^{(-20)}$ was established by the work of Corlette \cite{Corlette} and Gromov--Schoen \cite{GS}. Corlette's Archimedean supperrigidity, implies that every linear representation $\psi:\Delta \rightarrow \mathsf{SL}_d(\mathbb{R})$ of a lattice $\Delta$ in either $\mathsf{Sp}(m,1)$, $m \geq 2$, or $\textup{F}_4^{(-20)}$, almost extends to a continuous representation of $G$. In particular, if the image of $\psi$ has non-compact closure and $\Delta$ is cocompact, then $\psi$ is necessarily a quasi-isometric embedding.
\par The goal of this article is to exhibit examples of finitely generated linear groups which are not commensurable to a lattice in any semisimple Lie group and all of whose discrete and faithful linear representations into any real semisimple Lie group are quasi-isometric embeddings. For a group $\Lambda$ and a normal subgroup $L$ of $\Lambda$, {\em the fiber product of $\Lambda$ with respect to $L$} is the subgroup of the product $\Lambda \times \Lambda$ defined as follows $$\Lambda \times_{L}\Lambda =\big\{(gw,g): g \in \Lambda, w \in L\big\}.$$ We denote by $\mu:\mathsf{SL}_d(\mathbb{R}) \rightarrow \mathbb{R}^d$ the Cartan projection (see Section \ref{prelim}) and fix the usual Euclidean norm $||\cdot||_{\mathbb{E}}$ on $\mathbb{R}^d$. A representation $\psi: \mathsf{H} \rightarrow \mathsf{SL}_d(\mathbb{R})$ is called {\em distal}\footnote{The term {\em distal} is from \cite[p. 537]{DK}.} if for every $h \in \mathsf{H}$ the moduli of the eigenvalues of $\psi(h)$ are equal to $1$. For a group $N$ we denote by $[N,N]$ the \hbox{commutator subgroup of $N$.}

The main result of this article is the following:

\begin{theorem} \label{main1} Let $\Delta$ be a cocompact lattice in $\mathsf{Sp}(m,1)$, $m \geq 2$, or $\textup{F}_4^{(-20)}$. Suppose that $N$ is an infinite normal subgroup of $\Delta$ such that the quotient $\Delta/N$ is a non-elementary hyperbolic group. For a representation $\rho:\Delta \times_N \Delta \rightarrow \mathsf{SL}_d(\mathbb{R})$ the following conditions are equivalent:

\medskip
\noindent \textup{(i)} The restrictions of $\rho$ on $\{1\}\times [N,N]$ and $[N,N]\times \{1\}$ are not distal.

\noindent \textup{(ii)} $\rho$ is discrete and has finite kernel.

\noindent \textup{(iii)} $\rho$ is a quasi-isometric embedding.  \end{theorem}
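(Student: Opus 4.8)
The plan is to prove the cycle of implications \textup{(iii)}$\Rightarrow$\textup{(ii)}$\Rightarrow$\textup{(i)}$\Rightarrow$\textup{(iii)}. The implication \textup{(iii)}$\Rightarrow$\textup{(ii)} is immediate: a quasi-isometric embedding of a finitely generated group has finite kernel (the kernel lies in a ball of $\Delta\times_N\Delta$, which is finite) and discrete image (a sequence $\rho(p_n)\to 1$ at pairwise distinct $p_n$ would force the word lengths $|p_n|$ to be bounded). For \textup{(ii)}$\Rightarrow$\textup{(i)} we argue by contradiction. If $\rho$ is distal on $\{1\}\times[N,N]$, then the Zariski closure of $\rho(\{1\}\times[N,N])$ is a compact‑by‑unipotent (hence amenable) algebraic group; since $\rho$ is discrete, $\rho(\{1\}\times[N,N])$ is a discrete, hence closed, subgroup of it, so it is amenable. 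But $\Delta$ is non‑elementary hyperbolic and $N$ is infinite, so $N$, being normal, has limit set all of $\partial\Delta$ and is non‑elementary; consequently $[N,N]$ is infinite (as $N$ is not virtually abelian), and being infinite and normal in $\Delta$ it is again non‑elementary, so it contains a free group $F$ of rank $2$. As $\ker\rho$ is finite it misses $F$, so $\rho(\{1\}\times[N,N])$ contains a copy of $F_2$ and is non‑amenable — a contradiction. The case of $[N,N]\times\{1\}$ is symmetric under the swap automorphism of $\Delta\times\Delta$, which preserves $\Delta\times_N\Delta$.

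For \textup{(i)}$\Rightarrow$\textup{(iii)} write $A=N\times\{1\}$, $B=\{1\}\times N$, $D=\{(g,g):g\in\Delta\}$; then $D$ normalizes $A$ and $B$, the latter two commute, $\Delta\times_N\Delta=D\ltimes A$, and $(w,w)=(w,1)(1,w)$ with commuting factors for $w\in N$. Since $\Delta/N$ is hyperbolic it is finitely presented with linear Dehn function, and a van Kampen diagram argument (as in the ``$1$--$2$--$3$'' theorem) shows $\Delta\times_N\Delta$ is finitely generated and undistorted in $\Delta\times\Delta$, i.e. $d_{\Delta\times_N\Delta}\bigl((a,b),(1,1)\bigr)\asymp\max\{|a|_\Delta,|b|_\Delta\}$; as $\rho$ is automatically Lipschitz up to an additive constant, it suffices to show $\lVert\mu(\rho(a,b))\rVert_{\mathbb E}\gtrsim\max\{|a|_\Delta,|b|_\Delta\}$. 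Set $\mathbf M=\overline{\rho(A)}^{Z}$, $\mathbf L=\overline{\rho(B)}^{Z}$ (Zariski closures): they centralize one another, both are normal in $\mathbf H:=\overline{\rho(\Delta\times_N\Delta)}^{Z}$ (normalized by $\rho(D)$ and by $\rho(A)$, resp. $\rho(B)$), and for $w\in N$ the element $\rho(w,w)$ acts on $\mathbf M$ by the inner automorphism $\operatorname{Inn}_{\rho(w,1)}$ (because $\rho(1,w)$ centralizes $\mathbf M$); hence $\{\operatorname{Inn}_{\rho(w,1)}:w\in N\}$ is Zariski dense in $\operatorname{Inn}(\mathbf M)=\mathbf M/Z(\mathbf M)$, and likewise for $\mathbf L$.

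The key step is the following lemma, which is where the non‑distality hypothesis on the \emph{commutator} subgroups is used: \emph{if $\Delta$ acts, via $\rho|_D$‑conjugation, boundedly on $\mathbf M$ (or boundedly on its adjoint semisimple noncompact quotient), then that group has a Zariski dense relatively compact subgroup, hence is compact; but $\overline{\rho([N,N]\times\{1\})}^{Z}\subseteq[\mathbf M,\mathbf M]$ has semisimple reductive part, so non‑distality of $\rho$ on $[N,N]\times\{1\}$ forces $\mathbf M$ to have a non‑trivial noncompact semisimple part, a contradiction} (and symmetrically for $\mathbf L$ using $\{1\}\times[N,N]$). Applied directly this gives $\rho(D)$ unbounded, so by Corlette's and Gromov--Schoen's superrigidity $\rho(g,g)=\hat\rho(g)\rho'(g)$ with $\hat\rho\colon G\to\mathsf{SL}_d(\mathbb R)$ continuous of finite kernel (hence a quasi‑isometric embedding) and $\rho'$ bounded; as $\Delta$ is cocompact in $G$, $\rho|_D$ is a quasi‑isometric embedding of $\Delta$. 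Since $\hat\rho(N)$ is Zariski dense in $\hat\rho(G)$ and $\hat\rho(w)\rho'(w)=\rho(w,1)\rho(1,w)$ for $w\in N$, we get $\hat\rho(G)\subseteq\overline{\mathbf M\,\mathbf L\,\rho'(\Delta)}^{Z}$. Let $\pi\colon\mathbf H^{\circ}\to\mathbf S=\prod_j\mathbf S_j$ be the projection onto the adjoint semisimple noncompact part; the normal subgroups $\pi(\mathbf M)$, $\pi(\mathbf L)$ are products of some of the $\mathbf S_j$ with disjoint index sets, both non‑trivial (by the lemma). Running the lemma again for the $\Delta$‑actions on $\pi(\mathbf M)$ and on $\pi(\mathbf L)$ shows $\hat\rho(G)$ cannot centralize $\pi(\mathbf M)$ or $\pi(\mathbf L)$, so (being simple) it projects with finite kernel onto some factor $\mathbf S_j\le\pi(\mathbf M)$ and some $\mathbf S_{j'}\le\pi(\mathbf L)$.

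Finally, for $(a,b)\in\Delta\times_N\Delta$ write $(a,b)=(a,a)(1,w)$ with $w=a^{-1}b\in N$, so $\rho(a,b)=\rho|_D(a)\,\rho|_B(w)$ with $\rho|_B(w)\in\mathbf L$. Since $\pi(\mathbf L)$ centralizes $\mathbf S_j$, its image in $\mathbf S_j$ is trivial, so the $\mathbf S_j$‑component of $\rho(a,b)$ equals that of $\rho|_D(a)$, which equals the $\mathbf S_j$‑component of $\hat\rho(a)$ up to a bounded factor; as $G\to\mathbf S_j$ has finite kernel this has norm $\asymp|a|_\Delta$, and the projection $\mathbf H^{\circ}\to\mathbf S_j$ is Lipschitz for the ambient metric (it is algebraic, hence polynomial), so $\lVert\mu(\rho(a,b))\rVert_{\mathbb E}\gtrsim|a|_\Delta$. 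Symmetrically, writing $(a,b)=(b,b)(b^{-1}a,1)$ and projecting to $\mathbf S_{j'}$ gives $\lVert\mu(\rho(a,b))\rVert_{\mathbb E}\gtrsim|b|_\Delta$; combined with undistortedness this yields $\lVert\mu(\rho(a,b))\rVert_{\mathbb E}\gtrsim\max\{|a|_\Delta,|b|_\Delta\}\asymp d_{\Delta\times_N\Delta}\bigl((a,b),(1,1)\bigr)$, which is \textup{(iii)}. \emph{The main obstacle} is the structural analysis of $\mathbf H$ underlying the key lemma — ruling out that the $\Delta$‑action on $\mathbf M$ or $\mathbf L$ (or the image of $\hat\rho(G)$) is ``too small'', so that $\hat\rho(G)$ genuinely meets the semisimple factors coming from \emph{both} fibres with finite kernel; once this is in hand, the projection argument mechanically removes the fibre contributions and produces the quasi‑isometric lower bound with no cancellation to control.
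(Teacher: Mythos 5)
Your reduction to $(\textup{iii})\Rightarrow(\textup{ii})\Rightarrow(\textup{i})\Rightarrow(\textup{iii})$ mirrors the paper, and the first two implications are fine. For $(\textup{ii})\Rightarrow(\textup{i})$ you use amenability of distal linear groups (a distal linear group is unipotent-by-compact, hence amenable) together with the Tits alternative to contradict discreteness, whereas the paper deduces the same conclusion from Margulis' lemma and the Abels--Margulis--Soifer/Benoist comparison between singular values and eigenvalues (Lemma \ref{non-distal}, via Theorem \ref{finitesubset}). Both are correct; yours is arguably more elementary.

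The real content is $(\textup{i})\Rightarrow(\textup{iii})$, and here you take a genuinely different route. The paper (Theorem \ref{main2}) passes to an exterior power $\psi=\wedge^r\rho$, extracts the span $V$ of attracting lines of proximal elements of $\psi(N_{\textup{R}})$, decomposes it into $\psi(N_{\textup{R}})$-irreducibles $V_i$, uses Schur to see that $\psi(N_{\textup{L}})$ acts by scalars $\varepsilon_i$ on each $V_i$, then applies Corlette to the diagonal restricted to one $V_1$, and exploits the exact cancellation $\varepsilon_1(n)\cdot\varepsilon_1(n)^{-1}=1$ in the product $\lVert\cdot\rVert\,\lVert\cdot^{-1}\rVert$. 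You instead work at the level of Zariski closures: let $\mathbf M,\mathbf L$ be the closures of the two fibres, prove they have noncompact semisimple part, apply Corlette to $\rho|_D$, and project to simple factors of $\mathbf H^{\circ}$ on which $\rho(B)$ (resp.\ $\rho(A)$) dies, so that the fibre contribution is literally annihilated rather than cancelled. Conceptually this is a clean alternative: both approaches reduce to Corlette's superrigidity for a representation of the diagonal $\Delta$, but the paper finds a subrepresentation on which the ``other'' fibre acts by scalars, while you find a quotient on which it acts trivially.

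That said, your proof has a genuine gap, which you flag yourself. Two specific issues. First, you assert that $\hat\rho(G)$ normalizes $\mathbf M$ (and $\mathbf L$) and hence acts on their semisimple quotients; this is not automatic. The Zariski closure of $\rho(D)$ normalizes $\mathbf M$, but $\hat\rho(\Delta)$ is not obviously contained in it: $\rho(\delta,\delta)=\hat\rho(\delta)\rho'(\delta)$, and while $\hat\rho(\Delta)$ and the compact group $\overline{\rho'(\Delta)}$ commute, one does not get $\hat\rho(\Delta)\subseteq\overline{\rho(D)}^{Z}$ for free. The cleaner move is to drop $\hat\rho$ entirely at this stage and work with $p_j\circ\rho|_D\colon\Delta\to\mathbf S_j$ directly (this is a representation of $\Delta$, and Corlette applies to it). Second, the step that $p_j\circ\rho|_D$ has noncompact image for some simple factor $\mathbf S_j$ of $\pi(\mathbf M)$ is exactly where the work is, and it is omitted. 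The argument should be: $\rho(N_{\textup{L}})$ is Zariski dense in $\mathbf M$, so its image in each adjoint simple factor $\mathbf S_j\subseteq\pi(\mathbf M)$ is Zariski dense, hence unbounded; and for $w\in N$ one has $p_j(\rho(w,w))=p_j(\rho(w,1))\,p_j(\rho(1,w))=p_j(\rho(w,1))$ since $p_j$ kills $\pi(\mathbf L)$, so $p_j(\rho(\operatorname{diag}N))$ is already unbounded, whence $p_j\circ\rho|_D$ has noncompact closure. Once you patch these two points --- replace $\hat\rho(G)$ by the representations $p_j\circ\rho|_D$ and $p_{j'}\circ\rho|_D$ of $\Delta$, verify their unboundedness as above, and then apply Corlette and the fact that algebraic quotient maps distort $\lVert\mu(\cdot)\rVert_{\mathbb E}$ by at most a bounded affine factor --- the final projection computation $p_j(\rho(a,b))=p_j(\rho(a,a))$ and $p_{j'}(\rho(a,b))=p_{j'}(\rho(b,b))$ gives the desired lower bound $\gtrsim\max\{|a|_{\Delta},|b|_{\Delta}\}$, which combined with Proposition \ref{undist} yields $(\textup{iii})$. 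So the strategy is sound, but as written the proof leaves precisely the hard structural step to the reader.
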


 The existence of non-elementary hyperbolic quotients of the rank $1$ lattice $\Delta$ follows by the work of Gromov \cite{Gromov}, Olshanskii \cite{Ol} and Delzant \cite{Delzant}. M. Kapovich in  \cite[Thm. 8.1]{Kap} proved that any such quotient is a non-linear hyperbolic group. By using the Cohen--Lyndon theorem established in \cite{Sun}, similarly as in \cite{MM}, it is possible to exhibit infinitely many pairwise non-isomorphic fiber products $\Delta \times_{N} \Delta$ in $\Delta \times \Delta$ of positive first Betti number such that the quotient $\Delta/N$ is non elementary hyperbolic (see Proposition \ref{main3}). Moreover, the fiber product $\Delta \times_N \Delta$ cannot be commensurable to a lattice in any semisimple Lie group (see Proposition \ref{nonlattice}) and by Theorem \ref{main1} all discrete faithful representations of $\Delta\times_N \Delta$ into any semisimple Lie group are quasi-isometric embeddings.

Fiber products have been previously used (e.g. see \cite{PT,Bass-Lubotzky,Bridson-Grunewald}) in order to exhibit counterexamples in various settings. The examples of Theorem \ref{main1} are inspired by the construction of Bass--Lubotzky \cite{Bass-Lubotzky} of the first examples of finitely generated linear superrigid groups which are not commensurable with a lattice in any product $G_1(k_1)\times \cdots \times G_{\ell}(k_{\ell})$ of simple algebraic groups $G_i$ defined over a local field $k_i$. Their examples provide a negative answer to a conjecture of Platonov \cite[p. 437]{PR} and are constructed as the fiber product $\Lambda \times_L \Lambda$ of a cocompact lattice $\Lambda$ in the rank 1 Lie group $\textup{F}_4^{(-20)}$ with respect to a normal subgroup $L$ such that $\Lambda/L$ is a finitely presented group without non-trivial finite quotients and $\textup{H}_2(\Lambda/L,\mathbb{Z})=0$. Similar examples, where $\Lambda$ is a cocompact lattice in the quaternionic Lie group $\mathsf{Sp}(m,1)$, $m \geq 2$, were exhibited by Lubotzky \hbox{in \cite{Lubotzky}.} We would like to remark that is not clear whether the superrigid examples $\Lambda \times_L \Lambda$ of Bass--Lubotzky and Lubotzky can be chosen to admit a quasi-isometric embedding into some real semisimple Lie group. If this were the case, by \cite[Thm. 1.4 (b)]{Bass-Lubotzky}, $\Lambda \times_L \Lambda$ has to be a quasi-isometrically embedded subgroup of $\Lambda \times \Lambda$. In particular, since the distortion of $\Lambda \times_L \Lambda$ in the product $\Lambda \times \Lambda$ is linear, it follows that the quotient $\Lambda/L$ has linear Dehn function (e.g. see \cite[Prop. 3.2]{IT}). In partiuclar, it has to be a hyperbolic group which, by construction, does not admit non-trivial finite quotients. However, as of now, the existence of non-residually finite hyperbolic groups is an open problem and thus it is not clear whether $\Lambda/L$ can be chosen to be hyperbolic.

The proof of Theorem \ref{main1} is based on the following theorem. We use Corlette's Archimedean superrigidity \cite{Corlette} to prove:

\begin{theorem} \label{main2} Let $\Delta$ be a cocompact lattice in $\mathsf{Sp}(m,1)$, $m \geq 2$, or $\textup{F}_4^{(-20)}$. Fix \hbox{$|\cdot|_{\Delta}:\Delta \rightarrow \mathbb{N}$} a word length function on $\Delta$ and suppose that $N$ is an infinite normal subgroup of $\Delta$. Suppose that \hbox{$\rho:\Delta \times_{N} \Delta  \rightarrow \mathsf{SL}_d(\mathbb{R})$} is a representation such that the restrictions of $\rho$ on $\{1\}\times [N,N]$ and \hbox{$[N,N] \times \{1\}$} are not distal. Then there exist $C,c>0$ such that $$\big|\big| \mu\big(\rho(\delta n,\delta)\big) \big|\big|_{\mathbb{E}} \geq c\big(\big|\delta n \big|_{\Delta}+\big|\delta \big|_{\Delta}\big)-C$$ for every $\delta \in \Delta$ and $n \in N$.
\end{theorem}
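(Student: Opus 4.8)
Throughout write $\Gamma=\Delta\times_N\Delta$ and identify $\Gamma$ with the semidirect product $\Delta\ltimes N$ ($\Delta$ acting by conjugation) via $(\delta n,\delta)\leftrightarrow(\delta,n)$, so that $\Delta_\Gamma:=\{(\delta,\delta):\delta\in\Delta\}$ is a copy of $\Delta$ generating $\Gamma$ together with the commuting normal subgroups $N_\Gamma:=N\times\{1\}$ and $N_\Gamma':=\{1\}\times N$. The plan is to prove the one-sided estimate
$$\big|\big|\mu\big(\rho(\delta n,\delta)\big)\big|\big|_{\mathbb E}\geq c\,|\delta n|_\Delta-C\qquad(\delta\in\Delta,\ n\in N)$$
for every $\rho$ satisfying the hypothesis; granting this, one applies it to $\rho\circ\iota$, where $\iota(g,h)=(h,g)$ restricts to an automorphism of $\Gamma$ sending $(\delta n,\delta)$ to $(\delta,\delta n)$ and merely interchanging the two distality conditions in the hypothesis. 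Since $(\delta,\delta n)=(\delta' n',\delta')$ with $\delta'=\delta n$ and $n'=n^{-1}$, this yields $||\mu(\rho(\delta n,\delta))||_{\mathbb E}\geq c\,|\delta|_\Delta-C$, and averaging the two estimates gives the theorem.

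Fix $\rho$. Let $\mathbf G=\overline{\rho(\Gamma)}^{\,Z}$ be the Zariski closure, and set $\mathbf G_0=\overline{\rho(\Delta_\Gamma)}^{\,Z}$, $\mathbf H=\overline{\rho(N_\Gamma)}^{\,Z}$, $\mathbf H'=\overline{\rho(N'_\Gamma)}^{\,Z}$. Then $\mathbf H,\mathbf H'\trianglelefteq\mathbf G$, $[\mathbf H,\mathbf H']=1$, $\mathbf G=\mathbf G_0\mathbf H$, and $\rho(\Delta_\Gamma)$ normalizes $\mathbf H$. Let $\mathbf R$ be the maximal normal amenable subgroup of $\mathbf H$; it is characteristic in $\mathbf H$, hence normal in $\mathbf G$, and $\mathbf H/\mathbf R$ is semisimple without compact factors. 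Let $\mathbf S$ be its adjoint group, realized faithfully on $\mathfrak h/\mathfrak r$ with $\mathfrak h=\mathrm{Lie}(\mathbf H)$, $\mathfrak r=\mathrm{Lie}(\mathbf R)$. Since $\rho(\Gamma)$ normalizes $\mathbf H$ and $\mathbf R$, conjugation through $\rho$ gives an action of $\Gamma$ on $\mathbf S$; restricting to $N_\Gamma$ yields a homomorphism $\psi:N\to\mathbf S$ with Zariski-dense image, and on the finite-index subgroup $\Delta'\leq\Delta$ given by the kernel of the action on the finite group $\mathrm{Out}(\mathbf S)$ we obtain $\phi:\Delta'\to\mathbf S$, which agrees with $\psi$ on $N\cap\Delta'$ (because $\rho(1,n)\in\mathbf H'$ acts trivially on $\mathbf H$, so conjugation by $\rho(n,n)$ and by $\rho(n,1)$ induce the same automorphism of $\mathbf S$) and satisfies: for $\delta\in\Delta'$, $n\in N$, the automorphism of $\mathbf S$ induced by $\rho(\delta n,\delta)$ is conjugation by $\phi(\delta)\psi(n)$.

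The key point is that $\mathbf S\neq\{1\}$, and this is where the hypothesis is used. Suppose $\mathbf H$ is amenable. Pick $w\in[N,N]$ with $\rho(w,1)$ not distal; its semisimple part $s$ lies in $\mathbf H$, has an eigenvalue of modulus $\neq1$, commutes with $\mathbf H'$, and $\rho(w,w)=\rho(w,1)\rho(1,w)\in\mathbf G_0$ has semisimple part $s\,s'$ with $s'$ the semisimple part of $\rho(1,w)\in\mathbf H'$. Feeding $\rho|_{\Delta_\Gamma}$ and the resulting conjugation actions into Corlette's superrigidity, and exploiting that $C_{\mathbf G}(\mathbf H)$ severely constrains $\mathbf H'$ — and hence, through $\rho(1,w)=\rho(w,w)\rho(w,1)^{-1}$, constrains $\rho\big([N,N]\times\{1\}\big)$ — one is forced to conclude that $\rho(w,1)$ is distal, a contradiction. (This is the analogue, in the present generality, of the following elementary remark: if $\mathbf H$ were the normalizer of a split torus in an $\mathsf{SL}_2$-factor acting on a plane $V$, then $C_{\mathbf G}(\mathbf H)$ acts by scalars on $V$, forcing $\rho(N'_\Gamma)$ — and then $\rho([N,N]\times\{1\})$ — to be distal on $V$, contradicting non-distality; property (T) of $\Delta$ enters to kill the torus-valued characters of $\Delta$ that would otherwise survive.) I expect the precise bookkeeping here — which factors of $\mathbf G$ can normalize the various pieces, combining superrigidity with property (T) — to be the technical heart of the argument.

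Granting $\mathbf S\neq\{1\}$: $\phi|_{N\cap\Delta'}=\psi|_{N\cap\Delta'}$ has Zariski-dense image in $\mathbf S$, which is semisimple without compact factors, so $\phi$ does not have relatively compact image; Corlette's superrigidity applied to the cocompact lattice $\Delta'$ of $G$ then shows $\phi$ almost extends to a continuous representation of $G$ whose continuous part $\hat\phi$ is non-trivial, hence — as $G$ is simple — proper, so $\phi$ is a quasi-isometric embedding and $||\mu_{\mathbf S}(\phi(\delta))||\gtrsim|\delta|_\Delta$ for $\delta\in\Delta'$. To transfer this to $\rho$, pass through $\mathrm{Ad}\circ\rho$ on $\mathfrak{sl}_d$, restrict to the $\rho(\Gamma)$-invariant subspace $\mathfrak h$, then to the quotient $\mathfrak h/\mathfrak r=\mathrm{Lie}(\mathbf S)$; comparing operator norms of restrictions and quotients of block matrices with those of $\mathrm{Ad}\circ\rho$, and using properness of $\mathrm{Ad}$ on $\mathsf{SL}_d$ and on $\mathbf S$, one gets $||\mu(\rho(\gamma))||_{\mathbb E}\geq c_1||\mu_{\mathbf S}(x_\gamma)||-C_1$, where $x_\gamma\in\mathbf S$ implements the automorphism of $\mathbf S$ induced by $\rho(\gamma)$. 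For $\gamma=(\delta n,\delta)$ with $\delta\in\Delta'$ this gives $||\mu(\rho(\delta n,\delta))||_{\mathbb E}\geq c_1||\mu_{\mathbf S}(\phi(\delta)\psi(n))||-C_1$. Finally, writing $n=n'g$ with $n'\in N\cap\Delta'$ and $g$ in a fixed finite set of coset representatives, we have $\psi(n)=\phi(n')\psi(g)$ with $\psi(g)$ bounded, so $\phi(\delta)\psi(n)=\phi(\delta n')\psi(g)$ and, by subadditivity of $\mu_{\mathbf S}$ and the quasi-isometric embedding property of $\phi$,
$$||\mu_{\mathbf S}(\phi(\delta)\psi(n))||\geq||\mu_{\mathbf S}(\phi(\delta n'))||-C_2\geq c\,|\delta n'|_\Delta-C_3\geq c\,|\delta n|_\Delta-C_4,$$
since $|\delta n'|_\Delta$ and $|\delta n|_\Delta$ differ by a bounded amount. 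Enlarging $C$ to absorb the finitely many cosets of $\Delta'$ in $\Delta$ yields the one-sided estimate for all $\delta\in\Delta$, which completes the proof modulo the claim $\mathbf S\neq\{1\}$.
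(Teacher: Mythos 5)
Your approach differs substantially from the paper's. You work with the Zariski closure $\mathbf H=\overline{\rho(N\times\{1\})}^{\,Z}$, strip off its amenable radical $\mathbf R$ to obtain a semisimple adjoint group $\mathbf S$, observe that conjugation by $\rho(\delta,\delta)$ gives (after passing to a finite-index $\Delta'$) an inner automorphism of $\mathbf S$, and thereby build a Zariski-dense homomorphism $\phi:\Delta'\to\mathbf S$ to which Corlette applies. The paper instead passes to an exterior power $\psi=\wedge^r\rho$ so that some $\psi(1,w_0)$ with $w_0\in[N,N]$ is proximal, takes $V$ to be the span of the attracting lines of proximal elements of $\psi(N_{\mathrm R})$, decomposes $V=V_1\oplus\cdots\oplus V_\ell$ into $\psi(N_{\mathrm R})$-irreducibles, notes that $\psi(N_{\mathrm L})$ must act by \emph{scalars} $\varepsilon_i$ on each $V_i$ (these scalars kill $[N,N]$, which is the crucial cancellation), shows that a finite-index $\Delta'$ preserves each proximal limit set $\Lambda_{\psi_i}^{\mathbb P}$, and then applies Corlette directly to $\hat\psi:\Delta'\to\mathsf{SL}^\pm(V_1)$. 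Both routes funnel a representation of $\Delta'$ into Corlette's superrigidity, but the paper's path is concrete and self-contained (it never needs to pass to quotients by amenable radicals, and it handles the non-semisimple case by a separate, short semisimplification step).

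The decisive gap in your proposal is that the claim $\mathbf S\neq\{1\}$ is not proved. You acknowledge this explicitly — the paragraph beginning ``The key point is that $\mathbf S\neq\{1\}$'' consists of a suggestive scenario plus the admission that ``the precise bookkeeping\dots{} [is] the technical heart of the argument.'' This is exactly the point where the non-distality hypothesis on $\rho|_{[N,N]\times\{1\}}$ must enter, and without it the rest of the argument has nothing to run on. I would also flag that the sketch you do give is likely on the wrong track: showing that $\mathbf H$ amenable forces $\rho([N,N]\times\{1\})$ to be distal should be an essentially algebraic statement (the Jordan-projection map $\lambda$ on a connected amenable algebraic group factors through an abelian split-torus quotient, hence kills the derived subgroup), not something that needs Corlette or property (T); some care is then required to handle the finitely many components of $\mathbf H$. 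Separately, the norm-transfer step ($\|\mu(\rho(\gamma))\|\gtrsim\|\mu_{\mathbf S}(x_\gamma)\|$ via restricting and quotienting $\operatorname{Ad}\circ\rho$ on $\mathfrak{sl}_d$) is plausible but stated very loosely: you need that the adjoint representation of $\mathbf S$ on $\mathfrak h/\mathfrak r$ is proper and that passing to an invariant subspace and then a quotient only shrinks $\|\mu\|$ up to a multiplicative constant; both are true but deserve a sentence each. The reduction to the one-sided estimate via the coordinate swap $\iota$, and the handling of the finite coset set of $\Delta'$ in $\Delta$, are fine and match the paper's strategy.
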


Theorem \ref{main1} will follow by Theorem \ref{main2} and the fact that the fiber product $\Delta \times_N \Delta$ is a quasi-isometrically embedded subgroup of $\Delta \times \Delta$ when the quotient $\Delta/N$ is hyperbolic \hbox{(see Proposition \ref{undist}).}

We close this section by raising the following question which motivated the construction of the examples of this article. We only consider linear representations over $\mathbb{R}$ or $\mathbb{C}$.

\begin{question} Does there exist a finitely generated group $\mathsf{\Gamma}$ which is not commensurable to a lattice in any connected semisimple Lie group, admits a discrete faithful linear representation and every linear representation of $\mathsf{\Gamma}$ with non-compact closure is a quasi-isometric embedding?\end{question}

The fiber products provided by Theorem \ref{main1} admit representations with infinite kernel whose image has non-compact closure, hence they do not provide a positive answer to the previous question.

The paper is organized as follows. In Section \ref{prelim} we provide some background on fiber products, proximality and semisimple representations. In Section \ref{proofs} we prove Theorem \ref{main1} and Theorem \ref{main2}. Finally, in Section \ref{add}, we provide some additional properties of the examples of Theorem \ref{main1}.
 \medskip

\noindent \textbf{Acknowledgements.} I would like to thank Fanny Kassel for fruitful discussions and comments on an earlier version of this paper. I would also like to thank the referee for carefully reading the paper and for their comments and suggestions. This work received funding from the European Research Council (ERC) under the European's Union Horizon 2020 research and innovation programme (ERC starting grant DiGGeS, grant agreement No 715982).

\section{Preliminaries} \label{prelim}
Let $\Gamma$ be a finitely generated group. Throughout this paper, we fix a left invariant word metric $d_{\Gamma}:\Gamma \times \Gamma \rightarrow \mathbb{N}$ induced by some finite generating subset of $\Gamma$. The word length fuction $|\cdot|_{\Gamma}:\Gamma \rightarrow \mathbb{N}$ is defined as $|\gamma|_{\Gamma}=d_{\Gamma}(\gamma,1)$, where $1 \in \Gamma$ denotes the identity element of $\Gamma$. The {\em normal closure of a subset $\mathcal{F}$} of $\Gamma$ is the normal subgroup $\llangle \mathcal{F} \rrangle=\langle \{ gfg^{-1}: g \in \Gamma,f \in \mathcal{F}\} \rangle$ of $\Gamma$. For two subgroups $A,B$ of $\Gamma$ the commutator $[A,B]$ is the group $[A,B]=\llangle \{ [g,h]:g\in A, h \in B\} \rrangle$. The group $\Gamma$ is called {\em  \textup{(}Gromov\textup{)} hyperbolic} if the Cayley graph of $\Gamma$ with respect to a fixed generating set  is a $\delta$-Gromov hyperbolic space for some $\delta \geq 0$ (see \cite{Gromov}). A hyperbolic group is called {\em elementary} if it is virtually cyclic.
\subsection{Cartan and Jordan projection} For a matrix $g \in \mathsf{SL}_d(\mathbb{R})$ we denote by $\ell_1(g)\geq \ldots \geq \ell_d(g)$ (resp. $\sigma_1(g)\geq \ldots \geq \sigma_d(g)$) the moduli of the eigenvalues (resp. singular values) of $g$ in non-increasing order. The {\em Cartan projection} $\mu:\mathsf{SL}_d(\mathbb{R})\rightarrow \mathbb{R}^d$ is the map $$\mu(g)=\big(\log\sigma_1(g),\ldots,\log \sigma_d(g) \big),$$ for $g \in \mathsf{SL}_d(\mathbb{R})$. The Cartan projection $\mu$ is a continuous, proper and surjective onto $\{(x_1,\ldots, x_d):x_1\geq \cdots \geq x_d\}$. We denote by $||\cdot ||$ the standard operator norm where $\big|\big|g\big|\big|=\sigma_1(g)$ for every $g \in \mathsf{SL}_d(\mathbb{R})$. The {\em Jordan projection} is the map $\lambda:\mathsf{SL}_d(\mathbb{R})\rightarrow \mathbb{R}^d$ is the map defined as follows $$\lambda(g)=\big(\log\ell_1(g), \ldots, \log \ell_d(g)\big)$$ for $g \in \mathsf{SL}_d(\mathbb{R})$. The Cartan and Jordan projection are related as follows $$\lambda(g)=\lim_{r \rightarrow \infty}\frac{1}{r}\mu(g^r),$$ for $g \in \mathsf{SL}_d(\mathbb{R})$ (e.g. see \cite{benoist-limitcone}).

\begin{definition}\label{qie} Let $\Gamma$ be a finitely generated group. A representation $\rho:\Gamma \rightarrow \mathsf{SL}_d(\mathbb{R})$ is called a quasi-isometric embedding if there exist $C,c>1$ such that for every $\gamma \in \Gamma$ we have $$C^{-1}\big|\gamma \big|_{\Gamma}-c \leq \big| \big|\mu (\rho(\gamma) ) \big|\big|_{\mathbb{E}} \leq C\big|\gamma \big|_{\Gamma}+c.$$ \end{definition}

\noindent Equivalently, if we equip the symmetric space $\mathsf{X}_{d}=\mathsf{SL}_d(\mathbb{R})/K_d$, where $K_d=\mathsf{SO}(d)$, with the distance function $$\mathsf{d}\big(gK_d,hK_d\big)=\Big(\sum_{i=1}^{d} \big(\log \sigma_i (g^{-1}h)\big)^2 \Big)^{\frac{1}{2}} \ \ g,h\in \mathsf{SL}_{d}(\mathbb{R}),$$ $\rho$ is a quasi-isometric embedding if and only if its map $\tau_{\rho}:(\Gamma,d_{\Gamma}) \rightarrow (\mathsf{X}_{d}, \mathsf{d})$, $\tau_{\rho}(\gamma)=\rho(\gamma)K_{d}$ for $\gamma \in \Gamma,$ is a quasi-isometric embedding.

We also need the following elementary fact for the Cartan projection of a matrix $g \in \mathsf{SL}_d(\mathbb{R})$ and its exterior powers.

\begin{fact} \label{exterior} Let $d \geq 2$ and $1 \leq m \leq d-1$. There exists a constant $C_{d,m}>1$, depending only on $d,m \in \mathbb{N}$ such that for every $g \in \mathsf{SL}_d(\mathbb{R})$ we have $$C_{d,m}^{-1}\big|\big| \mu(g)\big|\big|_{\mathbb{E}} \leq\big|\big| \mu(\wedge^m g)\big|\big|_{\mathbb{E}} \leq C_{d,m}\big|\big| \mu(g)\big|\big|_{\mathbb{E}}.$$ In particular, for a finitely generated group $\Gamma$, a representation $\rho: \Gamma \rightarrow \mathsf{SL}_d(\mathbb{R})$ is a quasi-isometric embedding if and only if $\wedge^m \rho:\Gamma \rightarrow \mathsf{SL}(\wedge^m \mathbb{R}^d)$ is a quasi-isometric embedding.\end{fact}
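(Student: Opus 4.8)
The plan is to observe that, up to reordering coordinates, $\mu(\wedge^m g)$ is obtained from $\mu(g)$ by a fixed injective linear map, after which the asserted two-sided estimate becomes a statement of elementary linear algebra on $\mathbb{R}^d$. First I would write a singular value decomposition $g=k_1\, a\, k_2$ with $k_1,k_2\in \mathsf{SO}(d)$ and $a=\mathrm{diag}(\sigma_1(g),\dots,\sigma_d(g))$. Since $\wedge^m$ is a homomorphism carrying $\mathsf{SO}(d)$ into the orthogonal group of $\wedge^m\mathbb{R}^d$ (for the inner product making $\{e_{i_1}\wedge\cdots\wedge e_{i_m}: i_1<\cdots<i_m\}$ an orthonormal basis) and carrying the diagonal matrix $a$ to the diagonal matrix with entries $\sigma_{i_1}(g)\cdots\sigma_{i_m}(g)$, the factorization $\wedge^m g=(\wedge^m k_1)(\wedge^m a)(\wedge^m k_2)$ is a singular value decomposition of $\wedge^m g$. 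Hence the multiset of singular values of $\wedge^m g$ is $\{\sigma_{i_1}(g)\cdots\sigma_{i_m}(g): 1\leq i_1<\cdots<i_m\leq d\}$, and, writing $\mu(g)=(\mu_1(g),\dots,\mu_d(g))$ with $\mu_i(g)=\log\sigma_i(g)$, the multiset of coordinates of $\mu(\wedge^m g)$ equals $\{\mu_{i_1}(g)+\cdots+\mu_{i_m}(g): 1\leq i_1<\cdots<i_m\leq d\}$.

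Next I would introduce the linear map $\Phi\colon \mathbb{R}^d\to \mathbb{R}^{\binom{d}{m}}$, $\Phi(x)=(x_{i_1}+\cdots+x_{i_m})_{i_1<\cdots<i_m}$. By the previous step the coordinates of $\mu(\wedge^m g)$ are a permutation of those of $\Phi(\mu(g))$, and since the Euclidean norm is invariant under permutations of coordinates we get $||\mu(\wedge^m g)||_{\mathbb{E}}=||\Phi(\mu(g))||_{\mathbb{E}}$. It thus suffices to check that $\Phi$ is injective: being an injective linear map between finite-dimensional spaces, it is then bi-Lipschitz onto its image, so there are $0<c_{d,m}\leq C'_{d,m}$ with $c_{d,m}||x||_{\mathbb{E}}\leq ||\Phi(x)||_{\mathbb{E}}\leq C'_{d,m}||x||_{\mathbb{E}}$ for all $x\in\mathbb{R}^d$, and the first assertion follows with $C_{d,m}=\max\{1/c_{d,m},C'_{d,m},2\}$ applied to $x=\mu(g)$. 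For injectivity, assume $\Phi(x)=0$, i.e.\ $\sum_{i\in S}x_i=0$ for every $m$-subset $S\subseteq\{1,\dots,d\}$. Comparing $S=\{1,\dots,m-1,j\}$ with $\{1,\dots,m-1,k\}$ for $j,k\in\{m,\dots,d\}$ — of which there are at least two because $m\leq d-1$ — gives $x_j=x_k$, so $x_m=\cdots=x_d$; comparing $\{1,\dots,m\}$ with $(\{1,\dots,m\}\setminus\{i\})\cup\{m+1\}$ for $i\leq m$ gives $x_i=x_{m+1}$. Hence all coordinates of $x$ are equal, and then $\sum_{i\in S}x_i=0$ forces them to vanish.

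Finally, for the quasi-isometric embedding statement, I would feed the bound just proved into Definition~\ref{qie}. Note first that $\wedge^m\rho$ does take values in $\mathsf{SL}(\wedge^m\mathbb{R}^d)$, since $\det(\wedge^m g)=(\det g)^{\binom{d-1}{m-1}}=1$ whenever $\det g=1$. Then $||\mu(\wedge^m\rho(\gamma))||_{\mathbb{E}}$ and $||\mu(\rho(\gamma))||_{\mathbb{E}}$ are comparable up to the multiplicative constant $C_{d,m}$ for every $\gamma\in\Gamma$, so a lower (resp.\ upper) linear bound in $|\gamma|_{\Gamma}$ for one of the two is equivalent to such a bound for the other; this is precisely the equivalence of the two quasi-isometric embedding conditions. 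I do not anticipate a genuine obstacle here: the only point that needs an argument is the injectivity of $\Phi$, and the only subtlety is that sorting the singular values into non-increasing order is harmless because it leaves the Euclidean norm unchanged.
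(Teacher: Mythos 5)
Your proof is correct. The paper states Fact \ref{exterior} without proof, so there is no argument in the paper to compare against; your derivation via the singular value decomposition $g = k_1 a k_2$, the identification of the singular values of $\wedge^m g$ as the $m$-fold products $\sigma_{i_1}(g)\cdots\sigma_{i_m}(g)$, and the reduction to the bi-Lipschitz property of the injective linear map $\Phi(x) = (x_{i_1}+\cdots+x_{i_m})_{i_1<\cdots<i_m}$ on $\mathbb{R}^d$ is exactly the standard route. The injectivity check uses $m \leq d-1$ in the right place (this hypothesis is genuinely needed: for $m = d$, $\Phi$ collapses to $x \mapsto x_1+\cdots+x_d$, which vanishes on the trace-zero hyperplane where $\mu$ takes its values), and the remark that $\det(\wedge^m g) = (\det g)^{\binom{d-1}{m-1}}$ so that $\wedge^m\rho$ indeed lands in $\mathsf{SL}(\wedge^m\mathbb{R}^d)$ is a detail worth including. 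No gaps.
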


We give here a proof for the reader's convenience.

\begin{proof} Note that for every $h\in \mathsf{SL}_{p}(\mathbb{R})$ and $1\leq i\leq p$, we have $\log \frac{\sigma_1(h)}{\sigma_p(h)}\geq |\log \sigma_i(h)|$ and $2\big((\log \sigma_1(h))^2+(\log \sigma_p(h))^2\big)\geq \big(\log \sigma_1(h)-\log\sigma_p(h)\big)^2$, so we obtain the double inequality $$\frac{1}{\sqrt{2}}\log \frac{\sigma_1(h)}{\sigma_p(h)}\leq \big|\big| \mu(h)\big|\big|_{\mathbb{E}} \leq \sqrt{p}\log \frac{\sigma_1(h)}{\sigma_p(h)}.$$ Let us set $r_{d,m}:=\binom{d}{m}$. For $g\in \mathsf{SL}_d(\mathbb{R})$ we have $$\sigma_1(\wedge^m g)=\sigma_1(g)\cdots \sigma_m(g), \ \sigma_{r_{d,m}}(\wedge^m g)=\sigma_d(g)\cdots \sigma_{d-m+1}(g)$$ and hence the previous bound shows that \begin{align*} \frac{1}{\sqrt{2}}\log \frac{\sigma_1(g)}{\sigma_d(g)}\leq \big|\big| \mu(\wedge^m g)\big|\big|_{\mathbb{E}} &\leq \sqrt{r_{d,m}}\log \frac{\sigma_1(g)\cdots \sigma_m(g)}{\sigma_d(g)\cdots \sigma_{d-m+1}(g)}\\ &\leq m\sqrt{r_{d,m}} \log\frac{\sigma_1(g)}{\sigma_d(g)}\leq m\sqrt{2r_{d,m}} \big|\big|\mu(g)\big|\big|_{\mathbb{E}}.\end{align*}  The inequality follows by taking $C_{d,m}:=m\sqrt{2r_{d,m}}$.\end{proof}

\subsection{Fiber products} Let $\Delta$ be a group and $N$ be a normal subgroup of $\Delta$. Let us recall that the {\em fiber product of $\Gamma$ with respect to $N$} is the subgroup of $\Delta \times \Delta$ generated by the diagonal subgroup $\textup{diag}(\Delta \times \Delta)=\big\{(\delta,\delta):\delta\in \Delta\big \}$ and the subgroup $N \times \{1\}$: $$\Delta \times_N\Delta=\big\{(\delta n, \delta):\delta \in \Delta,  n \in N \big\}.$$

\noindent Suppose that $\Delta$ is finitely generated, $S$ is a finite generating subset of $\Delta$ and $N=\llangle \mathcal{F} \rrangle$ for some finite subset $\mathcal{F}$ of $\Delta$. Observe that $\big \{(s,s):s \in S \big\} \cup \big \{(1,w):w \in \mathcal{F}\big\}$ is a finite generating subset of $\Delta\times_N \Delta$. In the special case where the quotient group $\Delta/N$ is hyperbolic, the fiber product $\Delta \times_N \Delta$ is an undistorted subgroup of (e.g. quasi-isometrically embedded) $\Delta \times \Delta$. This fact will follow by \cite[Thm. 2]{OS}.

\begin{proposition} \label{undist} Suppose that $\Delta=\langle X | R \rangle$ is a finitely presented group and $N$ is a normal subgroup of $\Delta$ such that $\Delta/N$ is hyperbolic. Let us fix a word length function $|\cdot|_{\Delta \times_N \Delta}:\Delta \times_N \Delta \rightarrow \mathbb{N}$. Then there exist $C,c>0$ such that for every $\delta \in \Delta$ and $n \in N$ we have  $$\big|(\delta n,\delta) \big|_{\Delta \times_N \Delta} \leq C\big( \big|\delta n|_{\Delta}+\big|\delta \big|_{\Delta}\big)+c.$$\end{proposition}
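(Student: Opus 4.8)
The plan is to reduce the stated inequality to a single quantitative fact about expressing elements of $N$ as products of conjugates, and then to invoke \cite[Theorem 2]{OS}. Since word length functions attached to different finite generating subsets of a finitely generated group are bi-Lipschitz equivalent, I may assume $|\cdot|_{\Delta}$ is the word length of $\Delta$ with respect to $X$, and it suffices to prove the bound for one convenient generating set of $\Delta\times_N\Delta$. Because $\Delta/N$ is hyperbolic it is finitely presented, and together with the finite presentability of $\Delta$ this gives a finite subset $\mathcal{F}\subseteq\Delta$ with $N=\langle\langle\mathcal{F}\rangle\rangle$ (so that $\Delta/N=\langle X\mid R,\mathcal{F}\rangle$). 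I fix such an $\mathcal{F}$ and work with the finite generating set $\mathcal{S}=\{(x,x):x\in X\}\cup\{(1,f):f\in\mathcal{F}\}$ of $\Delta\times_N\Delta$, whose word length I denote by $|\cdot|$.

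The first step is a purely algebraic manipulation. For $\delta\in\Delta$ and $n\in N$ one checks directly the identity
\[
(\delta n,\delta)=(\delta,\delta)\,(n,n)\,(1,n)^{-1}
\]
in $\Delta\times_N\Delta$. The diagonal elements $(\delta,\delta)$ and $(n,n)$ are words in the generators $(x,x)$, whence $|(\delta,\delta)|\leq|\delta|_{\Delta}$ and $|(n,n)|\leq|n|_{\Delta}$, and since $|n|_{\Delta}=|\delta^{-1}(\delta n)|_{\Delta}\leq|\delta|_{\Delta}+|\delta n|_{\Delta}$, we obtain $|(\delta n,\delta)|\leq 2|\delta|_{\Delta}+|\delta n|_{\Delta}+|(1,n)|$. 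Thus the proposition reduces to bounding $|(1,n)|$ by a linear function of $|n|_{\Delta}$ for $n\in N$.

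The second step transfers this to a statement inside $\Delta$. If $n=\prod_{j=1}^{k}g_jf_j^{\varepsilon_j}g_j^{-1}$ holds in $\Delta$ with $f_j\in\mathcal{F}$, $\varepsilon_j\in\{\pm1\}$ and $g_j\in\Delta$, then from $(g_j,g_j)(1,f_j^{\varepsilon_j})(g_j,g_j)^{-1}=(1,g_jf_j^{\varepsilon_j}g_j^{-1})$ one gets $(1,n)=\prod_{j=1}^{k}(g_j,g_j)(1,f_j^{\varepsilon_j})(g_j,g_j)^{-1}$, and hence $|(1,n)|\leq\sum_{j=1}^{k}\bigl(2|g_j|_{\Delta}+1\bigr)$. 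It therefore suffices to know that every $n\in N$ admits such an expression with $\sum_{j=1}^{k}\bigl(|g_j|_{\Delta}+1\bigr)\leq C_0|n|_{\Delta}$ for a constant $C_0$ independent of $n$. This linear control on the total length of the conjugating elements, valid since $\Delta$ is finitely presented and $\Delta/N$ is hyperbolic, is precisely the content of \cite[Theorem 2]{OS}; feeding it into the previous step yields $|(\delta n,\delta)|\leq C(|\delta n|_{\Delta}+|\delta|_{\Delta})+c$ for suitable $C,c>0$.

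I expect the invocation of \cite[Theorem 2]{OS} to be the only substantial point. A direct van Kampen diagram argument over the presentation $\langle X\mid R,\mathcal{F}\rangle$ of $\Delta/N$ already bounds the number $k$ of conjugates linearly in $|n|_{\Delta}$, but it controls $\sum_{j}|g_j|_{\Delta}$ only up to a quantity of order $|n|_{\Delta}^{2}$; the improvement to a linear bound genuinely uses the thinness of van Kampen diagrams over the hyperbolic group $\Delta/N$, which is what \cite[Theorem 2]{OS} supplies. The remaining manipulations are elementary bookkeeping.
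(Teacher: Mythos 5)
Your proposal is correct and, at bottom, invokes the same key input as the paper, namely \cite[Theorem 2]{OS}, so the two arguments are essentially the same; only the bookkeeping differs. You first reduce via the identity $(\delta n,\delta)=(\delta,\delta)(n,n)(1,n)^{-1}$ to a linear bound on $|(1,n)|$ for $n\in N$, and then phrase the input from \cite[Theorem 2]{OS} as a linear control $\sum_j\bigl(|g_j|_{\Delta}+1\bigr)\leq C_0|n|_{\Delta}$ on an expression $n=\prod_j g_jf_j^{\pm1}g_j^{-1}$. The paper instead lifts everything to the free group $F(X)$, sets $\overline{N}=\langle\langle R\cup\overline{\mathcal{F}}\rangle\rangle$, quotes \cite[Theorem 2]{OS} directly as the statement that $F(X)\times_{\overline{N}}F(X)$ is quasi-isometrically embedded in $F(X)\times F(X)$, and pushes forward through $\pi\times\pi$. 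Your reduction is a bit cleaner in that it stays inside $\Delta$, but note two small points: the Olshanskii--Sapir result is intrinsically a statement about the free-group fiber product, so extracting your conjugator-length bound in $\Delta$ still tacitly passes through $F(X)$ (relators in $R$ project to the identity and can be discarded); and the intermediate reformulation in terms of $\sum_j(|g_j|_{\Delta}+1)$ is not strictly necessary, since applying the quasi-isometric embedding of $F(X)\times_{\overline{N}}F(X)$ to an element $(\bar n,1)$ with $\pi(\bar n)=n$ and $|\bar n|_{F(X)}$ comparable to $|n|_{\Delta}$ already gives $|(1,n)|_{\Delta\times_N\Delta}\leq C|n|_{\Delta}+c$ after projecting. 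Both derivations are sound.
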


\begin{proof} We remark that since $\Delta/N$ is finitely presented, there exists a finite subset $\mathcal{F}$ of $\Delta$ such that $N=\llangle \mathcal{F}\rrangle$. Let $F(X)$ be the free group on the set $X$ and denote by $\pi: F(X) \twoheadrightarrow \Delta$ the projection onto $\Delta$ with kernel $\llangle R \rrangle$. Let $\overline{\mathcal{F}}$ be a finite subset of $F(X)$ with $\pi(\overline{\mathcal{F}})=\mathcal{F}$ and $\overline{N}=\llangle R \cup \overline{\mathcal{F}} \rrangle$. Note that the product $\pi \times \pi: F(X)\times F(X)\twoheadrightarrow  \Delta \times \Delta$ restricts to an epimorphism $\pi \times \pi: F(X)\times_{\overline{N}} F(X)\twoheadrightarrow  \Delta \times_{N} \Delta$. Since $\Delta/N=F(X)/\overline{N}$ is hyberbolic, by \cite[Thm. 2]{OS} it follows that $F(X)\times_{\overline{N}}F(X)$ is quasi-isometrically embedded in $F(X)\times F(X)$ i.e. there exist $C_0,c_0>0$ such that \begin{equation} \label{undist-eq1} \big|\big(\pi(g),\pi(gn)\big) \big|_{\Delta \times_{N}\Delta} \leq C_0 \big (|g|_{F(X)}+|n|_{F(X)}\big)+c_0 \end{equation} for every $g \in F(X)$ and $n \in \overline{N}$. Now the conclusion follows by (\ref{undist-eq1}) and the observation that for every $\delta \in \Delta$ there exists $\overline{\delta}\in F(X)$ with $\delta=\pi(\overline{\delta})$ and \hbox{$|\overline{\delta}|_{F(X)}\leq  |\delta|_{\Delta}$.}\end{proof}

\subsection{Proximality} An element $g \in \mathsf{SL}_d(\mathbb{R})$ is called {\em proximal} if $\ell_1(g)>\ell_2(g)$. In this case $g$ has a unique eigenvalue of maximum modulus. In addition, $g$ admits a unique attracting fixed point $x_g^{+}$ in $\mathbb{P}(V)$ and a repelling hypeplane $V_{g}^{-}$ such that $V=x_{g}^{+}\oplus V_{g}^{-}$ and for every $y \in \mathbb{P}(V)\smallsetminus \mathbb{P}(V_{g}^{-})$, $\lim_{n}g^ny=x_{g}^{+}$. 

For a subgroup $N$ of $\mathsf{SL}_d(\mathbb{R})$ containing a proximal element, the proximal limit set of $N$, denoted by $\Lambda_{N}^{\mathbb{P}}$, is the closure of the attracting fixed points in $\mathbb{P}(\mathbb{R}^d)$ of all proximal elements in $N$. In the special case where $N$ is irreducible (i.e. does not preserve any proper subspace of $\mathbb{R}^d$) we have the following fact proved by Benoist.

\begin{lemma}\textup{(}\cite{benoist-limitcone}\textup{)} \label{minimal} Let $N$ be an irreducible subgroup of $\mathsf{SL}_d(\mathbb{R})$ which contains a proximal element. Then $N$ acts minimally on $\Lambda_{N}^{\mathbb{P}}$ in $\mathbb{P}(\mathbb{R}^d)$ \textup{(}i.e. $\overline{N\cdot x}=\Lambda_{N}^{\mathbb{P}}$ for every \hbox{$x \in \Lambda_{N}^{\mathbb{P}}$\textup{)}.} \end{lemma}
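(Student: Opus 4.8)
The plan is to prove the two inclusions $\overline{N\cdot x}\subseteq \Lambda_{N}^{\mathbb{P}}$ and $\Lambda_{N}^{\mathbb{P}}\subseteq \overline{N\cdot x}$ separately, for an arbitrary fixed $x\in\Lambda_{N}^{\mathbb{P}}$. Note first that $\Lambda_{N}^{\mathbb{P}}$ is non-empty precisely because $N$ contains a proximal element.

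For the first inclusion, I would check that the set $\mathcal{A}$ of attracting fixed points in $\mathbb{P}(\mathbb{R}^d)$ of proximal elements of $N$ is $N$-invariant: if $g\in N$ is proximal and $h\in N$, then $hgh^{-1}$ is again proximal, with attracting fixed point $h\cdot x_{g}^{+}$ and repelling hyperplane $h\cdot V_{g}^{-}$. Hence $h\cdot\mathcal{A}=\mathcal{A}$ for every $h\in N$, and passing to closures, $h\cdot\Lambda_{N}^{\mathbb{P}}=\Lambda_{N}^{\mathbb{P}}$. Since $\Lambda_{N}^{\mathbb{P}}$ is closed, $N$-invariant, and contains $x$, it contains $\overline{N\cdot x}$.

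For the reverse inclusion, the key input is irreducibility. Given any $y\in\mathbb{P}(\mathbb{R}^d)$ represented by a nonzero vector $v$, the linear span $W=\mathrm{span}\{n(v):n\in N\}$ is a nonzero $N$-invariant subspace of $\mathbb{R}^d$, hence $W=\mathbb{R}^d$; in particular the orbit $N\cdot y$ is not contained in $\mathbb{P}(W')$ for any proper subspace $W'$ of $\mathbb{R}^d$. Now fix a proximal element $g\in N$, with attracting fixed point $x_{g}^{+}$ and repelling hyperplane $V_{g}^{-}$. Applying the previous observation with $y=x$, there exists $h\in N$ with $hx\notin\mathbb{P}(V_{g}^{-})$; by the dynamics of proximal elements, $g^{n}(hx)\to x_{g}^{+}$ as $n\to\infty$, so $x_{g}^{+}\in\overline{N\cdot x}$. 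Since this holds for every proximal $g\in N$, the closed set $\overline{N\cdot x}$ contains the closure of $\mathcal{A}$, namely $\Lambda_{N}^{\mathbb{P}}$, completing the argument.

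The only step requiring a little care is the use of irreducibility to guarantee that projective orbits span $\mathbb{R}^d$ (and hence meet the complement of any given repelling hyperplane); the $N$-invariance of $\Lambda_{N}^{\mathbb{P}}$, which amounts to the observation that conjugates of proximal elements are proximal, is the other point to spell out. Neither presents a genuine obstacle, which is why the statement is quoted from \cite{benoist-limitcone} without further comment.
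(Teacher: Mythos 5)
Your proof is correct. The paper does not give its own proof of this lemma---it is quoted directly from Benoist's paper \cite{benoist-limitcone}---and your argument (invariance of the attracting-point set under conjugation for the inclusion $\overline{N\cdot x}\subseteq\Lambda_{N}^{\mathbb{P}}$, and irreducibility plus the attracting dynamics of a proximal $g$ to capture $x_{g}^{+}$ for the reverse inclusion) is exactly the standard proof in that reference.
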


\subsection{Semisimple representations} Let $\Gamma$ be a discrete group. A representation $\rho:\Gamma \rightarrow \mathsf{GL}_d(\mathbb{R})$ is called {\em semisimple} if $\rho$ decomposes as a direct sum of irreducible representations of $\Gamma$. In this case, the Zariski closure $\overline{\rho(\Gamma)}^{\textup{Zar}}$ of $\rho(\Gamma)$ in $\mathsf{GL}_{d}(\mathbb{R})$ is a real reductive algebraic Lie group. Moreover, every linear representation over $\mathbb{R}$ of $\overline{\rho(\Gamma)}^{\textup{Zar}}$ is semisimple, see \cite[Ch.3, Thm. 3.13.1]{Var}. In particular, if $\rho:\Gamma \rightarrow \mathsf{GL}_d(\mathbb{R})$ is semisimple, all  of its exterior powers $\wedge^{i}\rho$, $1\leq i\leq d-1$, are also semisimple. By default, we consider the trivial representation as semisimple.
\par Let $\psi:\Gamma \rightarrow \mathsf{GL}_d(\mathbb{R})$ be a representation and $G$ be the Zariski closure of $\psi(\Gamma)$ in $\mathsf{GL}_d(\mathbb{R})$. Let us choose a Levi decomposition $G=L \ltimes U_{G}$, where $U_{G}$ is the unipotent radical of $G$ and denote by $\pi:G\rightarrow L$ the canonical projection. The {\em semisimplification \hbox{$\psi^{ss}:\Gamma \rightarrow \mathsf{GL}_d(\mathbb{R})$} of $\psi$} is the composition $\psi^{ss}=\psi\circ \pi$. The semisimplification $\psi^{ss}$ does not depend on the choice of $L$ up to conjugation by an element of $U_{G}$ (see \cite[p. 24]{GGKW} for more details). One of the key properties of the semisimplification $\psi^{ss}$ is that $\rho^{ss}$ is a limit of conjugates of $\rho$ and hence $$\lambda(\psi(\gamma))=\lambda (\psi^{ss}(\gamma)) \ \ \forall \gamma \in \Gamma.$$

The following result, was established by Benoist in \cite{benoist-limitcone} by using a result of Abels-Margulis-Soifer \cite{AMS} (see also \cite[Thm. 4.12]{GGKW} for a proof) and offers a connection between eigenvalues and singular values of elements in a semisimple subgroup of $\mathsf{SL}_d(\mathbb{R})$.

\begin{theorem} \textup{(\cite{AMS, benoist-limitcone})} \label{finitesubset} Let $\Gamma$ be an abstract group and $\rho:\Gamma \rightarrow \mathsf{SL}_{d}(\mathbb{R})$ be a semisimple representation. Then there exists a finite subset $F$ of $\Gamma$ and $C_{\rho}>0$ with the property: for every $\gamma \in \Gamma$ there exists $f \in F$ such that $$\max_{1 \leq i \leq d} \big| \log \sigma_i(\rho(\gamma))-\log \ell_i(\rho(\gamma f))\big|\leq C_{\rho}.$$ \end{theorem}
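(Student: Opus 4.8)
The statement is \cite[Theorem 4.12]{GGKW} (which in turn rests on \cite{AMS,benoist-limitcone}), so the argument I have in mind is essentially the one given there: reduce everything to the Abels--Margulis--Soifer theorem. The first move is to pass to the Zariski closure $H$ of $\rho(\Gamma)$ in $\mathsf{SL}_d(\mathbb{R})$. Since $\rho$ is semisimple, $H$ is reductive, and $\rho$, regarded as a homomorphism into $H$, has Zariski-dense image --- which is precisely the setting in which Abels--Margulis--Soifer applies (note that discreteness of $\rho(\Gamma)$ plays no role). After conjugating $H$ to a self-adjoint subgroup I would fix a Cartan decomposition of $H$ compatible with that of $\mathsf{SL}_d(\mathbb{R})$: a maximal compact $K_H=H\cap\mathsf{SO}(d)$, a Cartan subspace $\mathfrak{a}_H$ contained in the symmetric traceless matrices, a closed positive Weyl chamber $\mathfrak{a}_H^{+}$, and the associated Cartan and Jordan projections $\mu_H,\lambda_H\colon H\to\mathfrak{a}_H^{+}$.

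Next I would record how $\mu_H,\lambda_H$ relate to the projections $\mu,\lambda$ of $\mathsf{SL}_d(\mathbb{R})$. Let $\Sigma\subset\mathfrak{a}_H^{*}$ be the finite set (multiplicities included, of total size $d$) of weights of $\mathfrak{a}_H$ acting on $\mathbb{R}^d$. Unwinding the Cartan and Jordan decompositions shows that for every $h\in H$ the vector $\mu(h)\in\mathbb{R}^d$ is the non-increasing rearrangement of $(\chi(\mu_H(h)))_{\chi\in\Sigma}$ and $\lambda(h)\in\mathbb{R}^d$ is the non-increasing rearrangement of $(\chi(\lambda_H(h)))_{\chi\in\Sigma}$. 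Since the non-increasing rearrangement map on $\mathbb{R}^d$ is $1$-Lipschitz for the Euclidean norm (a consequence of the rearrangement inequality $\langle x,y\rangle\leq\langle x^{\downarrow},y^{\downarrow}\rangle$), this yields a constant $c_0>0$ with $\|\mu(h)-\lambda(h)\|_{\mathbb{E}}\leq c_0\|\mu_H(h)-\lambda_H(h)\|$ for all $h\in H$.

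Now I would apply the Abels--Margulis--Soifer theorem to the Zariski-dense subgroup $\rho(\Gamma)$ of the reductive group $H$: it furnishes a finite subset $F\subset\Gamma$ and a constant $C_0>0$ such that every $\gamma\in\Gamma$ admits some $f\in F$ with $\|\mu_H(\rho(\gamma f))-\lambda_H(\rho(\gamma f))\|\leq C_0$. By the previous paragraph this gives $\|\mu(\rho(\gamma f))-\lambda(\rho(\gamma f))\|_{\mathbb{E}}\leq c_0C_0$; combining with the elementary estimate $\|\mu(\rho(\gamma))-\mu(\rho(\gamma f))\|_{\mathbb{E}}\leq C_1$, where $C_1=\sqrt{d}\max_{f\in F}\|\mu(\rho(f))\|_{\mathbb{E}}$ (which follows from $\sigma_i(a)\sigma_d(b)\leq\sigma_i(ab)\leq\sigma_i(a)\sigma_1(b)$), gives for every $\gamma\in\Gamma$ and the corresponding $f$
\[ \max_{1\leq i\leq d}\bigl|\log\sigma_i(\rho(\gamma))-\log\ell_i(\rho(\gamma f))\bigr|\ \leq\ \bigl\|\mu(\rho(\gamma))-\lambda(\rho(\gamma f))\bigr\|_{\mathbb{E}}\ \leq\ c_0C_0+C_1=:C_\rho, \]
which is the claim.

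The substantive content is the Abels--Margulis--Soifer input, which I would invoke as a black box; everything else is bookkeeping. The one place in that bookkeeping requiring care is the weight comparison of the second paragraph: one must choose the maximal compact subgroup and the positive Weyl chamber of $H$ compatibly with the ambient Cartan involution of $\mathsf{SL}_d(\mathbb{R})$, so that the ``evaluate the restricted weights, then sort'' description holds simultaneously for $\mu$ and for $\lambda$. This is standard in the structure theory of reductive subgroups of $\mathsf{SL}_d(\mathbb{R})$ (see \cite{benoist-limitcone,GGKW}).
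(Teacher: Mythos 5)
The paper does not prove this statement itself; it cites \cite{AMS,benoist-limitcone} and points to \cite[Theorem 4.12]{GGKW} for a proof. Your argument is a correct and faithful reconstruction of that proof: pass to the reductive Zariski closure, invoke Abels--Margulis--Soifer to get $\|\mu_H(\rho(\gamma f))-\lambda_H(\rho(\gamma f))\|$ bounded, and translate back to singular values and eigenvalue moduli in $\mathsf{SL}_d(\mathbb{R})$ via the restricted-weight description and the $1$-Lipschitz rearrangement bound, so there is nothing to add.
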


We also need the following lemma which essentially follows by the previous theorem.

\begin{lemma} \label{qie-semisimple} Let $\Gamma$ be a finitely generated group, $\rho:\Gamma \rightarrow \mathsf{SL}_d(\mathbb{R})$ be a representation and \hbox{$\rho^{ss}:\Gamma \rightarrow \mathsf{SL}_d(\mathbb{R})$} be a semisimplification of $\rho$. There exists $\delta>0$ such that $$\big|\big|\rho(\gamma)\big|\big| \geq \delta \big|\big|\rho^{ss}(\gamma)\big|\big| $$ for every $\gamma \in \Gamma$. In particular, if $\rho^{ss}$ is a quasi-isometric embedding then $\rho$ is also a quasi-isometric embedding.\end{lemma}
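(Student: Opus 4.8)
The plan is to prove first the operator-norm estimate $\|\rho(\gamma)\|\geq\delta\,\|\rho^{ss}(\gamma)\|$, and then to deduce the quasi-isometric statement from it via an elementary comparison between $\|\mu(\cdot)\|_{\mathbb{E}}$ and $\log\|\cdot\|$.

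For the estimate, the key input is Theorem \ref{finitesubset} applied to $\rho^{ss}$, which is legitimate since $\rho^{ss}$ is semisimple (its Zariski closure, being the image of the reductive Levi quotient of the Zariski closure of $\rho(\Gamma)$, is reductive). This provides a finite subset $F\subset\Gamma$ and $C_0>0$ such that for every $\gamma\in\Gamma$ there is $f\in F$ with $\bigl|\log\sigma_1\bigl(\rho^{ss}(\gamma)\bigr)-\log\ell_1\bigl(\rho^{ss}(\gamma f)\bigr)\bigr|\leq C_0$, i.e. $\|\rho^{ss}(\gamma)\|\leq e^{C_0}\,\ell_1\bigl(\rho^{ss}(\gamma f)\bigr)$. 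Now I would invoke the two facts recalled in the Preliminaries: the Jordan projections of $\rho$ and $\rho^{ss}$ agree, so $\ell_1(\rho^{ss}(\gamma f))=\ell_1(\rho(\gamma f))$; and the spectral radius is bounded by the operator norm, so $\ell_1(\rho(\gamma f))\leq\|\rho(\gamma f)\|\leq\|\rho(\gamma)\|\cdot\|\rho(f)\|$. Setting $M:=\max_{f\in F}\|\rho(f)\|\geq 1$ and $\delta:=(e^{C_0}M)^{-1}>0$, this chain gives $\|\rho^{ss}(\gamma)\|\leq\delta^{-1}\|\rho(\gamma)\|$ for every $\gamma$, as desired.

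For the quasi-isometric statement I would use that, for every $g\in\mathsf{SL}_d(\mathbb{R})$, one has $\sigma_1(g)\geq 1\geq\sigma_d(g)$ and $\sigma_d(g)^{-1}=\sigma_1(g^{-1})$, whence
$$\max\bigl(\log\|g\|,\log\|g^{-1}\|\bigr)\ \leq\ \|\mu(g)\|_{\mathbb{E}}\ \leq\ \sqrt{d}\,\max\bigl(\log\|g\|,\log\|g^{-1}\|\bigr).$$
Applying the first estimate to both $\gamma$ and $\gamma^{-1}$ yields $\max\bigl(\log\|\rho(\gamma)\|,\log\|\rho(\gamma)^{-1}\|\bigr)\geq\log\delta+\max\bigl(\log\|\rho^{ss}(\gamma)\|,\log\|\rho^{ss}(\gamma)^{-1}\|\bigr)\geq\log\delta+\tfrac{1}{\sqrt{d}}\|\mu(\rho^{ss}(\gamma))\|_{\mathbb{E}}$. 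If $\rho^{ss}$ is a quasi-isometric embedding, the last quantity is at least $\log\delta+\tfrac{1}{\sqrt{d}}\bigl(C^{-1}|\gamma|_{\Gamma}-c\bigr)$, and combining with the left inequality in the display we obtain the lower bound of Definition \ref{qie} for $\rho$; the upper bound holds for any representation of a finitely generated group by submultiplicativity of the operator norm along a word. Hence $\rho$ is a quasi-isometric embedding.

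The argument is short once Theorem \ref{finitesubset} is brought in, and I do not expect a genuine obstacle. The one point to watch is that one cannot argue directly from the fact that $\rho^{ss}$ is a limit of conjugates of $\rho$, since conjugation distorts operator norms uncontrollably; the estimate must be routed through eigenvalues, which are conjugation-invariant and common to $\rho$ and $\rho^{ss}$, and this is precisely what Theorem \ref{finitesubset} encodes. A secondary subtlety is that $\|\mu(g)\|_{\mathbb{E}}$ can be strictly larger than $\log\|g\|$, which is why the quasi-isometry deduction has to be carried out simultaneously for $\gamma$ and $\gamma^{-1}$ through the displayed two-sided comparison.
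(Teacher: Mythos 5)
Your proof is correct and follows essentially the same route as the paper: apply Theorem \ref{finitesubset} to $\rho^{ss}$, use the equality of Jordan projections $\lambda(\rho^{ss}(g))=\lambda(\rho(g))$ to transfer to $\rho$, and close with submultiplicativity of the operator norm. The only difference is that you spell out the quasi-isometry deduction via the $\ell^\infty$--$\ell^2$ comparison of $\log\|\cdot\|$ and $\|\mu(\cdot)\|_{\mathbb{E}}$, whereas the paper leaves that final step implicit.
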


\begin{proof} Since $\rho^{ss}$ is semisimple and $\lambda(\rho^{ss}(g))=\lambda(\rho(g))$ for every $g \in \Gamma$, by Theorem \ref{finitesubset}, there exists a finite subset $F$ of $\Gamma$ and $C>0$ with the property: for every $\gamma \in \Gamma$ there exists $f \in F$ with \begin{align}\label{AMSineq}\big| \log \ell_1(\rho(\gamma f))-\log \sigma_1(\rho^{ss}(\gamma))\big|\leq C\end{align} Now choose $\gamma \in \Gamma$ and $f\in F$ satisfying (\ref{AMSineq}). By the submultiplicativity of the operator norm and the fact that $||g||\geq \ell_1(g)$ for $g\in \mathsf{SL}_d(\mathbb{R})$, we conclude for every $\gamma \in \Gamma$ that \begin{align*} \big|\big|\rho(\gamma)\big|\big| \geq \big|\big|\rho(f)\big|\big|^{-1} \big|\big|\rho(\gamma f)\big|\big|\geq \big|\big|\rho(f)\big|\big|^{-1} \ell_1(\rho(\gamma f)) \geq \big|\big|\rho(f)\big|\big|^{-1}e^{-C} \big|\big| \rho^{ss}(\gamma)\big|\big|.\end{align*} In particular, we conclude that $$  \big|\big|\rho(\gamma)\big|\big| \geq \big(\min_{f\in F} \big|\big|\rho(f)\big|\big|^{-1}\big)e^{-C} \big|\big| \rho^{ss}(\gamma)\big|\big|$$ for every $\gamma \in \Gamma$. The inequality follows.\end{proof}

The following lemma is a consequence of Margulis' lemma.

\begin{lemma} \label{non-distal} Suppose that $\mathsf{H}$ is a group which contains a free subgroup $F\subset \mathsf{H}$ of rank $2$ and \hbox{$\psi:\mathsf{H}\rightarrow \mathsf{SL}_d(\mathbb{R})$} is a discrete faithful representation. Then $\psi(\mathsf{H})$ is not distal. \end{lemma}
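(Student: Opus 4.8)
The plan is to argue by contradiction, assuming that $\psi(\mathsf{H})$ is distal, i.e. that every eigenvalue of every element of $\psi(\mathsf{H})$ has modulus $1$. First I would reduce to a Zariski-connected situation: let $G$ denote the Zariski closure of $\psi(\mathsf{H})$ in $\mathsf{SL}_d(\mathbb{R})$, pass to the finite-index subgroup $\mathsf{H}_0 = \psi^{-1}(G^\circ)$ of $\mathsf{H}$, which still contains a non-cyclic free subgroup and is still discrete and faithfully represented. The distal hypothesis forces $G^\circ$ to have no nontrivial $\mathbb{R}$-split torus in its reductive part; more precisely, decomposing along a Levi decomposition $G^\circ = L \ltimes U$, the Levi factor $L$ must be the almost direct product of a compact group and an $\mathbb{R}$-anisotropic semisimple group (any $\mathbb{R}$-split torus would produce an element with an eigenvalue off the unit circle, and likewise any hyperbolic element in a noncompact semisimple factor). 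Hence a finite-index subgroup of $\psi(\mathsf{H}_0)$ has compact image under projection to $L$ modulo the anisotropic part, and in any case $\psi(\mathsf{H}_0)$ is contained in a group of the form (compact)$\ltimes$(unipotent) after a further reduction — the point being that distal linear groups, up to finite index and conjugation, preserve a flag with compact action on the pieces.

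The key input is then \emph{Margulis' lemma} (the Zassenhaus–Kazhdan–Margulis neighborhood theorem): there is a neighborhood $\mathcal{U}$ of the identity in $\mathsf{SL}_d(\mathbb{R})$ such that any discrete subgroup generated by elements of $\mathcal{U}$ is virtually nilpotent. So the next step is to show that if $\psi(\mathsf{H})$ is distal and discrete, then it is virtually nilpotent, which directly contradicts the presence of a non-cyclic free subgroup (free groups of rank $\geq 2$ are not virtually nilpotent, since virtually nilpotent groups have polynomial growth while $F_2$ has exponential growth). To get virtual nilpotence from distality plus discreteness, I would use the structure above: a distal linear group is, up to finite index, contained in $C \ltimes U$ with $C$ compact and $U$ unipotent; intersecting a discrete subgroup with a small enough identity neighborhood lands it in a neighborhood where the compact part is trivial on the relevant quotients, and Margulis' lemma applies. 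Alternatively — and this is probably the cleanest route — invoke directly that a finitely generated distal subgroup of $\mathsf{GL}_d(\mathbb{R})$ which is discrete must be virtually nilpotent: by the Tits alternative, $\psi(\mathsf{H})$ either is virtually solvable or contains a non-cyclic free group; if it contains a non-cyclic free subgroup $F$, then (by Tits' construction of free subgroups via ping-pong on proximal elements, or more simply because a non-virtually-solvable linear group has elements whose eigenvalues are not all of modulus $1$ — e.g. using that the Zariski closure contains a split torus) we contradict distality. This gives that $\psi(\mathsf{H})$ is virtually solvable; then a discrete virtually solvable linear group of this type, being distal, is virtually nilpotent (a discrete solvable subgroup of a group of the form compact$\ltimes$unipotent is virtually nilpotent, e.g. by Malcev's theorem on finitely generated nilpotent groups and properness arguments), again contradicting the free subgroup.

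The step I expect to be the main obstacle is making precise the passage ``distal $\Rightarrow$ (up to finite index and conjugation) contained in $C \ltimes U$ with $C$ compact, $U$ unipotent'' in a way that cleanly feeds into Margulis' lemma; this is essentially the statement that a distal algebraic group over $\mathbb{R}$ has reductive part with anisotropic (hence, after projecting to the noncompact part, still having no $\mathbb{R}$-hyperbolic elements — but anisotropic semisimple groups over $\mathbb{R}$ are in fact compact) — so in fact the Levi factor of the Zariski closure of a distal group is \emph{compact}. Once that is granted, the whole Zariski closure is compact-by-unipotent, a discrete subgroup of such a group is virtually nilpotent by Margulis' lemma (or directly since compact-by-unipotent discrete groups are virtually polycyclic of a controlled type), and the existence of a non-cyclic free subgroup is the desired contradiction. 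I would write this up by: (1) recalling that the Levi factor of the Zariski closure of a distal subgroup of $\mathsf{SL}_d(\mathbb{R})$ is compact; (2) deducing $\psi(\mathsf{H})$ is virtually nilpotent via Margulis' lemma applied to the discrete subgroup $\psi(\mathsf{H})$ sitting in compact-by-unipotent; (3) concluding that $\psi(\mathsf{H})$, hence $\mathsf{H}$, cannot contain a non-cyclic free subgroup — contradiction.
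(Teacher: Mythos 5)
Your proposal takes a genuinely different route from the paper's, and while the overall plan is sound, it hides the hard part in a ``recalled fact'' whose proof is essentially equivalent to the deep input the paper invokes directly.

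The paper's argument is much shorter: restrict $\psi$ to a free rank-$2$ subgroup $F \leq \mathsf{H}$, pass to a semisimplification $\psi_0$ of $\psi|_F$, and note that $\psi_0$ has the same Jordan projections as $\psi|_F$. By a Margulis-type lemma on algebraic limits of discrete faithful representations of non-virtually-nilpotent groups (the reference is \cite[Theorem~2.11]{BIW}, which is a different result from the Zassenhaus--Kazhdan--Margulis neighborhood theorem you invoke), $\psi_0$ is still discrete and faithful. Then Theorem~\ref{finitesubset} (the Abels--Margulis--Soifer/Benoist comparison of singular values with eigenvalues for semisimple representations) shows that if $\psi_0$ were distal, its singular values would be uniformly bounded, so $\psi_0(F)$ would be relatively compact and hence finite, contradicting faithfulness on the infinite group $F$. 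No Levi decomposition of the Zariski closure, no virtual nilpotence, no Tits alternative.

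Your step~(1) --- ``the Levi factor of the Zariski closure of a distal subgroup of $\mathsf{SL}_d(\mathbb{R})$ is compact'' --- is exactly where the depth lives, and it is not something that can be quoted as a standard structural fact without comment. Distality (all eigenvalue moduli equal to $1$) is not a Zariski-closed condition, so it does not pass to the Zariski closure: a Zariski-dense subgroup of a noncompact group can perfectly well have bounded traces, for instance. Establishing~(1) amounts to showing that a Zariski-dense subgroup of a noncompact reductive real algebraic group must contain an element with some eigenvalue off the unit circle; this is precisely Benoist's limit-cone theorem, i.e.\ the same Abels--Margulis--Soifer circle of ideas packaged in Theorem~\ref{finitesubset}. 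So your ``recalled fact'' is, in disguise, the same ingredient the paper uses, applied after a detour through the Levi decomposition. Your alternative Tits-alternative route has the same issue and is partly circular: Tits only tells you ``$\psi(\mathsf{H})$ is virtually solvable or contains $F_2$,'' but $\psi$ is faithful and $\mathsf{H}\supset F_2$ by hypothesis, so you are already in the second case; to derive a contradiction from distality you again need that a Zariski-dense subgroup of a noncompact semisimple group has non-distal elements, and the existence of a split torus in the Zariski closure does not by itself give you a non-distal element of $\psi(\mathsf{H})$ --- the torus need not meet the group.

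Two further remarks. First, once you grant that $\psi(\mathsf{H})$ lies in a compact-by-unipotent real Lie group, step~(2) can be streamlined: such groups are amenable, so $\psi(\mathsf{H})$ (hence $\mathsf{H}$, by faithfulness) is amenable and cannot contain a non-cyclic free subgroup; you do not need Margulis' lemma or virtual nilpotence at all, and in fact you do not even use discreteness at that point. Second, this observation reveals that your argument, if completed, would prove a stronger statement than the lemma (any faithful distal representation of a group containing $F_2$ is impossible, discrete or not), but that extra strength is bought at the price of proving the nontrivial structure theorem in~(1). The paper's proof, by contrast, exploits discreteness in an essential and economical way and should be seen as the intended argument.
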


\begin{proof} Let $\psi_0$ be a semisimplification of the restriction $\psi|_{F}:F \rightarrow \mathsf{SL}_d(\mathbb{R})$. Since $\psi$ is discrete and faithful, $\psi_0$ is an algebriac limit of discrete faithful representations, namely of conjugates of $\psi|_{F}$. Since $F$ does not contain non-trivial normal nilpotent subgroups, it follows by \cite[Thm. 2.11]{BIW} that $\psi_0$ is discrete and faithful. In particular, by Theorem \ref{finitesubset}, $\psi_0$  cannot be distal. Therefore, since $\lambda(\psi(g))=\lambda(\psi_0(g))$ for every $g\in F$, we conclude that $\psi|_F$ is not distal. \end{proof}

We will also need the following well known elementary fact.

\begin{fact}\label{normal} Let $\Gamma$ be a group and $N$ be a normal subgroup of $\Gamma$. Suppose that $\rho:\Gamma \rightarrow \mathsf{GL}_d(\mathbb{R})$ is a semisimple representation. Then the restriction $\rho|_N:N\rightarrow \mathsf{GL}_d(\mathbb{R})$ of $\rho$ on $N$ is semisimple.\end{fact}

\begin{proof} We may assume that $\rho|_N$ is non-trivial and $\rho$ is irreducible. Let $V_0\neq \{0\}$ be an $\rho(N)$-invariant subspace of $\mathbb{R}^d$ of minimal dimension. Since $\textup{dim}(V_0)$ is minimal, $\rho(N)$ acts irreducibly  on $V_0$. We claim that there exist $h_1\ldots, h_r\in \Gamma$ such that $V=\oplus_{i=1}^{r} \rho(h_i)V_0$. This is enough to conclude that $\rho|_N$ is semisimple since $N$ is normal in $\Gamma$ and $\rho(N)$ preserves and acts irreducibly on $\rho(h_i)V_0$ for each $i$.

We prove the claim. If $V_0=\mathbb{R}^d$ then $\rho|_N$ is obviously semisimple and we take $h_1=1$. If $V_0$ is a proper subspace of $\mathbb{R}^d$, there exists $h_2 \in \Gamma$ such that $\rho(h_2)V_0\neq V_0$. Then $\rho(h_2)V_0\cap V_0$ is a proper subspace of $V_0$ which is $\rho(N)$-invariant and hence $\rho(h_2)V_0\cap V_0=\{0\}$. It follows that $V_1:=V_0+\rho(h_2)V_0$ is direct. If $V_1=\mathbb{R}^d$, then $\rho|_{N}$ is semisimple with two irreducible components. Otherwise, there exists $h_3 \in \Gamma$ such that $\rho(h_3)V_0$ is not a subspace of $V_1$. In particular, $\rho(h_3)V_0\cap V_1$ is a proper $\rho(N)$-invariant subspace of $V_0$ and so $\rho(h_3)V_0\cap V_1=\{0\}$. It follows that $\rho(h_3)V_0+V_1$ is direct and $\rho|_{N}$ is semisimple. By continuing similarly we obtain the conclusion. \end{proof}

\subsection{Archimedean superrigidity} We review here the following version of Corlette's Archimedean superrigidity theorem \cite{Corlette} (see \cite[Thm. 3.8]{FH}) that we use for the proof of Theorem \ref{main2}. 

\begin{theorem} \label{superrigidity} Let $G$ be either $\mathsf{Sp}(m,1)$, $m \geq 2$, or $\textup{F}_4^{(-20)}$ and $\Delta$ be a lattice in $G$. Suppose that $\rho:\Gamma \rightarrow \mathsf{SL}_d(\mathbb{R})$ is a representation. Then there exists a continuous representation $\hat{\rho}:G \rightarrow \mathsf{SL}_d(\mathbb{R})$ and a representation $\rho':\Delta \rightarrow \mathsf{GL}_d(\mathbb{R})$ with compact Zariski closure such that:
\medskip

\noindent \textup{(i)} the images $\hat{\rho}(\Delta)$ and $\rho'(\Delta)$ commute.\\
\noindent \textup{(ii)} $\rho(\delta)=\hat{\rho}(\delta)\rho'(\delta)$ for every $\delta \in \Delta$.\end{theorem}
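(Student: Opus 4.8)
The plan is to reproduce the harmonic--maps proof of archimedean superrigidity due to Corlette, so the argument has four stages: a reduction, an analytic input (existence of an equivariant harmonic map), a geometric rigidity input (the harmonic map is totally geodesic), and a bookkeeping stage that produces $\hat\rho$ and $\rho'$.

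First I would reduce to the case where $\rho$ is semisimple with image Zariski dense in the real points of a semisimple group with no compact factors. Since $\mathsf{Sp}(n,1)$ and $\textup{F}_4^{(-20)}$ have Kazhdan's property (T), the lattice $\Delta$ has finite abelianization, so the image of $\rho$ in any torus quotient of the Zariski closure $\mathbf{H}=\overline{\rho(\Delta)}^{\,\mathrm{Zar}}$ is finite, and the projection of $\rho$ to a compact factor of $\mathbf{H}$ has bounded image. Absorbing all of these bounded contributions into the eventual $\rho'$, one is left to extend the induced homomorphism $\bar\rho\colon\Delta\to\mathbf{H}_{nc}$, where $\mathbf{H}_{nc}$ is the product of the noncompact simple factors of $[\mathbf{H},\mathbf{H}]$, $\bar\rho(\Delta)$ is Zariski dense in it, and $N=\mathbf{H}_{nc}/K'$ is its Riemannian symmetric space. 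The reduction of the general, non-semisimple, case to the semisimple one again uses property (T) of $\Delta$, through vanishing of the cohomology that obstructs promoting an almost-extension of the semisimplification of $\rho$ to one of $\rho$; this is carried out in \cite[Theorem 3.8]{FH}.

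Second, because $\bar\rho$ is reductive in the sense that its Zariski closure is reductive, Corlette's existence theorem for equivariant harmonic maps \cite{Corlette} --- extending Eells--Sampson, classical when $\Delta$ is cocompact and also established by Corlette for finite-covolume lattices --- produces a $\bar\rho$-equivariant harmonic map $f\colon G/K\to N$, where $G/K$ is the quaternionic hyperbolic space $\mathsf{Sp}(n,1)/\mathsf{Sp}(n)\mathsf{Sp}(1)$ or the Cayley hyperbolic plane $\textup{F}_4^{(-20)}/\mathsf{Spin}(9)$. Third --- and this is the heart of the matter, the step I expect to be the main obstacle --- one shows that $f$ is totally geodesic, i.e. $\nabla df\equiv 0$. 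This is Corlette's vanishing theorem: $G/K$ carries a $G$-invariant parallel calibration (the fundamental $4$-form of the quaternionic K\"ahler structure, respectively the canonical $8$-form of the $\mathsf{Spin}(9)$-structure), and feeding the Bochner--Matsushima identity for $f$ into the exterior differential system generated by this form forces the second fundamental form of $f$ to vanish. It is the rank-one quaternionic, respectively octonionic, analogue of the K\"ahler rigidity theorems of Siu and Sampson; I would invoke \cite{Corlette} for it (the non-archimedean counterpart, which is why these lattices are genuinely superrigid, is due to Gromov--Schoen \cite{GS}).

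Finally I would convert the totally geodesic equivariant map into the asserted almost-extension. If $f$ is constant, then $\bar\rho(\Delta)$ fixes a point of $N$, hence is bounded, contradicting Zariski density unless $\mathbf{H}_{nc}$ is trivial --- in which case $\rho$ already has bounded image and there is nothing to prove. Otherwise $df$ has constant nonzero rank; since $\mathfrak{p}=T_o(G/K)$ is $\mathrm{Ad}(K)$-irreducible, $df_o$ is injective with $df_o(\mathfrak{p})$ a Lie triple system in $T_{f(o)}N$, so by the Cartan correspondence $df_o$ extends, up to a harmless rescaling of the metric, to an injective Lie algebra homomorphism $\mathfrak{g}\to\mathfrak{h}_{nc}$; and as $G$ is simply connected ($\mathsf{Sp}(n,1)$ deformation retracts onto $\mathsf{Sp}(n)\times\mathsf{Sp}(1)$ and $\textup{F}_4^{(-20)}$ onto $\mathsf{Spin}(9)$, both simply connected) this integrates to a continuous homomorphism $\sigma\colon G\to\mathbf{H}_{nc}$ realizing $f$ as the orbit map $g\mapsto\sigma(g)\cdot f(o)$. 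Comparing with the equivariance of $f$, the defect $\delta\mapsto\sigma(\delta)^{-1}\bar\rho(\delta)$ takes values in the compact pointwise stabilizer of the totally geodesic submanifold $f(G/K)$; since $\mathbf{H}_{nc}$ has no compact factors, a short structural argument (any connected compact normal subgroup of $\mathbf{H}_{nc}$ is trivial) shows this defect is in fact a finite, hence bounded, central cocycle and that $\sigma$ is surjective onto $\mathbf{H}_{nc}$ up to a finite center. Lifting $\sigma$ through the relevant isogeny (possible as $G$ is simply connected) and composing with the inclusion of the corresponding semisimple factor of $[\mathbf{H},\mathbf{H}]$ into $\mathsf{SL}_m(\mathbb{R})$ gives the continuous representation $\hat\rho\colon G\to\mathsf{SL}_m(\mathbb{R})$; reassembling with the bounded pieces set aside in the reduction (compact factors, central torus, central cocycle), $\rho'(\delta):=\hat\rho(\delta)^{-1}\rho(\delta)$ has relatively compact image, and $[\hat\rho(\Delta),\rho'(\Delta)]=1$ because $\hat\rho(G)$ lands in a semisimple factor that commutes with the compact factors and the central torus, and centralizes the residual finite central cocycle by a connectedness argument --- $[\hat\rho(g),\rho'(\gamma)]$ is a continuous function of $g\in G$ valued in the finite center of $\mathbf{H}$ and equal to the identity at $g=1$. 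Together with $\rho(\delta)=\hat\rho(\delta)\rho'(\delta)$, this yields conditions (i) and (ii).
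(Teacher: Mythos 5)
The paper does not give a proof of this statement. Immediately before the theorem the text reads ``We review here the following version of Corlette's Archimedean superrigidity theorem \cite{Corlette} (see \cite[Theorem 3.8]{FH})'': it is a quoted result, imported from Corlette's paper and from Fisher--Hitchman's reformulation, and no argument is supplied. So there is no proof in the paper against which your proposal can be compared line-by-line.

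That said, your sketch is a faithful high-level summary of the standard harmonic-maps route to Corlette's theorem, and you correctly attribute the reduction from general $\rho$ to the semisimple case to \cite[Theorem 3.8]{FH}, which uses property (T) to kill the obstruction cohomology. The four-stage plan (Zariski-density reduction using finite abelianization and compact factors; existence of an equivariant harmonic map; the Bochner/Siu--Sampson-style vanishing theorem via the quaternionic $4$-form or $\mathsf{Spin}(9)$ $8$-form forcing the map to be totally geodesic; and the Lie-triple-system bookkeeping producing $\hat\rho$) is the correct architecture. Two places are stated more confidently than the details warrant. First, the passage from a Lie triple system inclusion $df_o(\mathfrak{p})\subset\mathfrak{p}_{\mathbf{H}_{nc}}$ to a Lie algebra homomorphism $\mathfrak{g}\to\mathfrak{h}_{nc}$ needs the curvature normalization to match, and more importantly needs one to construct the $\mathfrak{k}$-part of the homomorphism from $[\mathfrak{p},\mathfrak{p}]$; this is routine but not automatic. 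Second, the ``defect'' $\delta\mapsto\sigma(\delta)^{-1}\bar\rho(\delta)$ initially lands only in the compact centralizer $C$ of $\sigma(G)$ inside $\mathbf{H}_{nc}$, which is not a priori finite or central; only after using Zariski density of $\bar\rho(\Delta)$ to see that $\overline{\sigma(G)}^{\mathrm{Zar}}\cdot C=\mathbf{H}_{nc}$, and the absence of compact factors to force $C$ finite central, does one reach the conclusion you state. Since the paper treats Theorem \ref{superrigidity} as a black box, the appropriate move here would simply be to cite \cite{Corlette} and \cite[Theorem 3.8]{FH} as the paper does, rather than to reconstruct the proof.
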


\begin{rmk} Let us recall that when $\rho$ has non-compact closure then $\hat{\rho}$ has necessarily finite kernel and there exists $C_{\rho}>0$ such that $\big|\big| \mu (\rho(\delta))-\mu (\hat{\rho}(\delta)) \big| \big|_{\mathbb{E}}\leq C_{\rho}$ for every $\delta \in \Delta$. In particular, if $\Delta$ is cocompact in $G$ then $\rho$ is a quasi-isometric embedding.\end{rmk}

\section{Proof of Theorem \ref{main1} and Theorem \ref{main2}} \label{proofs}

We first prove Theorem \ref{main2} which we use for the proof of Theorem \ref{main1}. In order to simplify our notation, for a subgroup $\mathsf{H}$ of $\Delta$ we consider the subgroups of $\Delta \times \Delta$: $$\mathsf{H}_{\textup{L}}:=\mathsf{H} \times \{1\} \ \ \textup{and} \ \ \mathsf{H}_{\textup{R}}:=\{1\}\times \mathsf{H}.$$

\begin{proof}[Proof of Theorem \ref{main2}.] Let us set $\Gamma:=\Delta \times_N\Delta$. There are two cases to consider.
\medskip

\noindent {\bf Case 1}: {\em The representation $\rho:\Gamma \rightarrow \mathsf{SL}_d(\mathbb{R})$ is semisimple.} 

\begin{proof}[Proof of Case 1.] Since the restriction of $\rho$ on $[N,N]_{\textup{R}}$ is not distal, there exists $w_{0}\in [N,N]$ and $1 \leq r \leq d-1$ such that $\ell_{r}(\rho(1,w_0))>\ell_{r+1}(\rho(1,w_0))$ and $\wedge^{r}\rho(1,w_0)$ is proximal.

Let us consider the exterior power $\psi:\Gamma \rightarrow \mathsf{SL}(\wedge^{r}\mathbb{R}^d)$, $\psi:=\wedge^{r}\rho$, of $\rho$. The representation $\psi$ is semisimple and the proximal limit set $\Lambda_{\psi(N_{\textup{R}})}^{\mathbb{P}}$ in $\mathbb{P}(\wedge^{r}\mathbb{R}^d)$ is non-empty. Let $V$ be the vector subspace of $\wedge^r \mathbb{R}^d$ spanned by the attracting eigenlines of proximal elements of $\psi(N_{\textup{R}})$.  Since $\psi|_{N_{\textup{R}}}$ is semisimple by Fact \ref{normal} (and hence its restriction on $V$), there exists a decomposition $$V=V_{1}\oplus \cdots \oplus V_{\ell}$$ such that $\psi(N_{\textup{R}})$ preserves and acts irreducibly on each $V_{i}$, $1 \leq i \leq \ell$. Note also that the restriction of $\psi_i:N_{\textup{R}}\rightarrow \mathsf{GL}(V_i)$ of $\psi|_{N_{\textup{R}}}$ on $V_i$ is proximal by the definition of $V$.

Since $\psi(N_{\textup{L}})$ centralizes $\psi(N_{\textup{R}})$, $\psi(N_{\textup{L}})$ fixes pointwise the attracting eigenline of every proximal element of $\psi(N_{\textup{R}})$. In particular, $\psi(N_{\textup{L}})$ fixes pointwise a basis of each of the subspaces $V_{1},\ldots, V_{\ell}$ of $V$. Since $\psi(N_{\textup{R}})|_{V_i}$ is irreducible, every element in $\psi(N_{\textup{L}})$ acts as a scalar on each $V_{i}$. In other words, there exist finitely many group homomorphisms \hbox{$\varepsilon_1,\ldots, \varepsilon_{\ell}:N \rightarrow \mathbb{R}^{\ast}$} such that \begin{equation} \label{thm2-eq1} \psi(n,1)v_i=\varepsilon_i(n)v_i \ \ \forall v_i\in V_i \ \forall n \in N. \end{equation} For $1 \leq i \leq \ell$, let $\Lambda_{\psi_i}^{\mathbb{P}}$ be the proximal limit set of $\psi_{i}$ in $\mathbb{P}(V_i)$ and note that $\Lambda_{\psi(N_{\textup{R}})}^{\mathbb{P}}=\Lambda_{\psi_i}^{\mathbb{P}} \cup \cdots \cup \Lambda_{\psi_{\ell}}^{\mathbb{P}}$. 
\medskip

We need the following claim.
\medskip

\noindent {\em Claim 1.} {\em For every $\delta\in \Delta$, $\psi(\delta,\delta)$ determines a bijection of $\sigma(\delta):\{1,\ldots,\ell \}\rightarrow \{1,\ldots,\ell \}$ as follows: $$\psi(\delta,\delta)\Lambda_{\psi_{i}}^{\mathbb{P}}=\Lambda_{\psi_{\sigma(i)}}^{\mathbb{P}}, \ 1\leq i \leq \ell.$$} To verify the previous claim it suffices to check that if $\psi(\delta,\delta)\Lambda_{\psi}^{\mathbb{P}}\cap \Lambda_{\psi_j}^{\mathbb{P}}$ is non-empty, then $\psi(\delta,\delta)\Lambda_{\psi_i}^{\mathbb{P}}=\Lambda_{\psi_j}^{\mathbb{P}}$. Suppose that $x_{i}\in \Lambda_{\psi_i}^{\mathbb{P}}$ with $\rho(\delta,\delta)x_{i}\in \Lambda_{\psi_j}^{\mathbb{P}}$. Since $\Lambda_{\psi_j}^{\mathbb{P}}$ is $\psi(N_{\textup{R}})$-invariant, for every $n \in N$ we have $$\rho(1,\delta n \delta^{-1})\rho(\delta,\delta)x_{i}=\rho(\delta,\delta)\rho(1,n)x_{i} \in \Lambda_{\psi_j}^{\mathbb{P}}.$$ In particular, since $\Lambda_{\psi_j}^{\mathbb{P}}$ is closed, we have $\psi(\delta,\delta)\overline{\psi(N_{\textup{R}})x_{i}}\subset \Lambda_{\psi_j}^{\mathbb{P}}$. Since $\psi(N_{\textup{R}})$ acts irreducibly on $V_{i}$, by Lemma \ref{minimal}, it acts minimally on $\Lambda_{\psi_i}^{\mathbb{P}}$ and hence $\psi(\delta,\delta)\Lambda_{\psi_i}^{\mathbb{P}}\subset \Lambda_{\psi_{j}}^{\mathbb{P}}$. Since $\Lambda_{\psi_i}^{\mathbb{P}}\cap \psi(\delta,\delta)^{-1}\Lambda_{\psi_{j}}^{\mathbb{P}}$ is non-empty we similarly deduce that $ \psi(\delta,\delta)^{-1}\Lambda_{\psi_{j}}^{\mathbb{P}}\subset \Lambda_{\psi_i}^{\mathbb{P}}$.

The claim follows. We conclude that $\psi(\delta,\delta)\Lambda_{\psi_i}^{\mathbb{P}}=\Lambda_{\psi_j}^{\mathbb{P}}$ and hence $\sigma(\delta)$ is a well defined bijection of $\{1,\ldots,\ell \}$. Finally, we obtain a well defined group homomorphism $\sigma:\Delta \rightarrow \textup{Sym}\big(\{1,\ldots,\ell \}\big)$, $\delta \mapsto \sigma(\delta)$. The group $\Delta':=\textup{ker}\sigma$ is a finite index subgroup $\Delta$ (of index at most $\ell!$) and by the definition of $\sigma$ has the property that \begin{equation} \label{thm2-eq0} \psi(\delta,\delta)\Lambda_{\psi_i}^{\mathbb{P}}=\Lambda_{\psi_i}^{\mathbb{P}}\end{equation} for every $1\leq i \leq \ell$ and $\delta \in \Delta'$. 
\medskip

\noindent {\em Continuing the proof of Case 1.} Now let us recall that there exists $w_0 \in [N,N]$ such that $\psi(1,w_0)$ is proximal and $V\subset \wedge^m \mathbb{R}^d$ is the subspace spanned by the attracting eigenlines of proximal elements in $\psi(N_{\textup{R}})$. It follows from Claim 1 that $w_0^{\ell!} \in [N,N]\cap \Delta'$. By (\ref{thm2-eq1}) we have $w_0^{\ell!} \in \bigcap_{i=1}^{\ell} \textup{ker}\varepsilon_i$ and hence $\psi(1,w_0^{\ell!})\big|_{V}=\psi(w_0^{\ell!},w_0^{\ell !})\big|_{V}$ is a proximal tranformation of $\mathsf{GL}(V)$ whose attracting fixed point (which is also the attracting fixed point of $\psi(1,w_0)$ in $\mathbb{P}(\wedge^r \mathbb{R}^d)$) lies exactly in one of the limit sets $\big\{\Lambda_{\psi_i}^{\mathbb{P}}\big\}_{i=1}^{\ell}$, say in $\Lambda_{\psi_1}^{\mathbb{P}}$. Let us note that $V$ cannot be one dimensional. If this were the case, since $\Delta'$ has finite abelianization, up to finite-index, its acts trivially on $V$ and hence $\ell_1(\psi(1,w_0))=1$ which is impossible since $\psi(1,w_0)$ is a proximal matrix in $\mathsf{SL}(\wedge^{r}\mathbb{R}^d)$. It follows that $\textup{dim}(V)\geq 2$. 

The representation $\hat{\psi}:\Delta' \rightarrow \mathsf{SL}^{\pm}(V)$, $\hat{\psi}(\delta,\delta)=\psi(\delta,\delta)|_{V}$, is well defined by (\ref{thm2-eq0}), proximal by the choice of $w_0\in [N,N]$. In particular, the image of $\hat{\psi}$ has non-compact closure in $\mathsf{SL}^{\pm}(V)$. It follows by Corlette's superrigidity (see Theorem \ref{superrigidity} and the remark below) that $\hat{\psi}$ is a quasi-isometric embedding and there exist $C_1,c_1>0$, depending on $\psi$, with the property: \begin{equation} \label{thm2-eq2'}\big|\big|\psi (\delta,\delta )|_{V_{1}}\big|\big| \cdot \big|\big|\psi (\delta,\delta)^{-1}|_{V}\big|\big| \geq e^{\frac{1}{\sqrt{\textup{dim}V}}|| \mu(\hat{\psi}(\gamma,\gamma))||_{\mathbb{E}}}\geq  C_{1}e^{c_1|\delta|_{\Delta}} \end{equation} for every $\delta \in \Delta'$. By using (\ref{thm2-eq2'}) for $n\in N$ and $\delta \in \Delta'$ we have the following estimate: \begin{align*}\big|\big|\psi (\delta n,\delta)\big|\big| \cdot \big|\big|\psi (n^{-1}\delta^{-1},\delta^{-1})\big|\big| &\geq \big|\big|\psi (\delta n,\delta)|_{V}\big|\big| \cdot \big|\big|\psi (n^{-1} \delta^{-1},\delta^{-1})|_{V}\big|\big|\\ &\geq \big|\big|\varepsilon_1(n) \psi (\delta,\delta)|_{V}\big|\big| \cdot \big|\big| \varepsilon_1(n)^{-1}\psi (\delta,\delta )^{-1}|_{V}\big|\big|\\  & =\big|\big|\psi (\delta,\delta)|_{V}\big|\big| \cdot \big|\big|\psi (\delta,\delta)^{-1}|_{V}\big|\big|\\ & \geq C_{1}e^{c_1|\delta|_{\Delta}}.\end{align*} Since $\Delta'$ has finite index in $\Delta$, by the previous estimate and Fact \ref{exterior} we conclude that there exist $C_2,c_2>0$ with the property: \begin{equation} \label{thm2-eq3} \big|\big|\rho (\delta n,\delta )\big|\big| \cdot \big|\big|\rho (\delta n,\delta)^{-1}\big|\big| \geq C_{2}e^{c_2|\delta|_{\Delta}} \end{equation} for every $\delta \in \Delta$ and $n \in N$.

\par Let $\tau:\Gamma \rightarrow \Gamma$ be the automorphism of $\Gamma$ swapping the two coordinates. Since $\rho$ is assumed to be semisimple, $\rho \circ \tau$ is also semisimple and by assumption $\rho|_{[N,N]_{\textup{L}}}=(\rho \circ \tau)|_{[N,N]_{\textup{R}}}$ is not distal. By working as previously, we conclude that there exist $C_3,c_3>0$ with the property: \begin{equation} \big|\big|\rho (\delta,\delta n)\big|\big| \cdot \big|\big|\rho (\delta,\delta n)^{-1}\big|\big| \geq C_{3}e^{c_3|\delta|_{\Delta}} \end{equation} for every $\delta \in \Delta$ and $n \in N$. In particular, if $\delta:=n^{-1}$ we have \begin{equation} \label{thm2-eq4} \big|\big|\rho (n,1)\big|\big| \cdot \big|\big|\rho (n,1)^{-1}\big|\big| \geq C_{3}e^{c_3|n|_{\Delta}} \end{equation} for every $n \in N$.
\par Now let $\delta \in \Delta$ and $n \in N$. Since $\Delta$ is finitely generated there exist $C_4,c_4>0$, independent of $\delta \in N$, such that $\big| \big| \rho(\delta,\delta) \big| \big| \cdot \big| \big| \rho(\delta,\delta)^{-1}\big|\big|\leq C_4 e^{c_4|\delta|_{\Delta}}$. Therefore, by using (\ref{thm2-eq4}) we have \begin{equation} \label{thm2-eq5} \big|\big|\rho (\delta n,\delta)\big|\big| \cdot \big|\big|\rho (\delta n,\delta)^{-1}\big|\big| \geq \frac{ \big|\big|\rho (n,1)\big|\big| \cdot \big|\big|\rho (n,1)^{-1}\big|\big|}{ \big|\big|\rho (\delta, \delta)\big|\big| \cdot \big|\big|\rho (\delta,\delta)^{-1}\big|\big|} \geq \frac{C_3}{C_4}e^{c_3|n|_{\Delta}-c_4|\delta|_{\Delta}} \end{equation} for $\delta\in \Delta$ and $n \in N$.

By letting $\theta:=\frac{3c_4}{c_2}$, raising both terms of (\ref{thm2-eq3}) in $\theta>0$ and using (\ref{thm2-eq5}) we have that:  \begin{equation} \big|\big|\rho (\delta n,\delta)\big|\big|^{\theta+1} \cdot \big|\big|\rho (\delta n,\delta)^{-1}\big|\big|^{\theta+1} \geq \frac{C_2^\theta C_3}{C_4} e^{c_2\theta |\delta|_{\Delta}-c_4|\delta|_{\Delta}+c_3|n|_{\Delta}}= \frac{C_2^{\theta} C_3}{C_4} e^{2c_4|\delta|_{\Delta} +c_3|n|_{\Delta}} \end{equation} for every $\delta\in \Delta$ and $n \in N$. In particular, since $|n|_{\Delta}+|\delta|_{\Delta}\geq \frac{1}{2}(|\delta|_{\Delta}+|\delta n|_{\Delta})$, we conclude that there exist $C_5,c_5>0$ such that \begin{equation} \big|\big|\rho (\delta n,\delta)\big|\big| \cdot \big|\big|\rho (\delta n,\delta)^{-1}\big|\big| \geq C_5 e^{c_5 (|n|_{\Delta}+|\delta|_{\Delta})} \geq C_5 e^{\frac{c_5}{2}(|\delta n|_{\Delta}+|\delta|_{\Delta})} \end{equation} for every $\delta\in \Delta$ and $n \in N$. This completes the proof of the theorem when $\rho$ is semisimple. \end{proof}
\medskip

\noindent {\bf Case 2:} {\em Suppose that $\rho:\Gamma \rightarrow \mathsf{SL}_d(\mathbb{R})$ is not a semisimple representation.} 

Let $\rho^{ss}:\Gamma \rightarrow \mathsf{SL}_d(\mathbb{R})$ be a semisimplification of $\rho$. Since $\lambda(\rho^{ss}(g))=\lambda(\rho(g))$ for every $g \in \Gamma$, the restrictions of $\rho^{ss}$ on $[N,N]_{\textup{R}}$ and $[N,N]_{\textup{L}}$ are not distal. It follows by Case 1 and Lemma \ref{qie-semisimple} that there exist $C,c,\delta>0$ such that \begin{align*}\big| \big| \mu (\rho(\delta n,\delta))\big|\big|_{\mathbb{E}}\geq \delta \big| \big| \mu (\rho^{ss}(\delta n,\delta))\big|\big|_{\mathbb{E}} &\geq \frac{1}{\sqrt{2}} \log \big(\big|\big|\rho^{ss}(\delta,\delta n)\big|\big|\cdot \big|\big|\rho^{ss}(\delta,\delta n)^{-1}\big|\big| \big) \\ &\geq c\big(|\delta n|_{\Delta}+|n|_{\Delta}\big)-C\end{align*} for every $\delta \in \Delta$ and $n\in N$. The proof is complete.  \end{proof}

\begin{proof}[Proof of Theorem \ref{main1}.] For the implication $\textup{(iii)} \Rightarrow \textup{(ii)}$ note that the kernel of $\rho$ is torsion subgroup of $\Delta \times_N \Delta$ and hence of $\Delta \times \Delta$. Since torsion subgroups of hyperbolic groups are finite \cite{Gromov} the same holds for the direct product $\Delta \times \Delta$, hence $\textup{ker}\rho$ is finite.

The implication $\textup{(ii)}\Rightarrow \textup{(i)}$ follows by Lemma \ref{non-distal}. Now we prove $\textup{(i)}\Rightarrow \textup{(iii)}$. Suppose that $\rho:\Delta \times_N \Delta \rightarrow \mathsf{SL}_d(\mathbb{R})$ is a representation such that the restrictions of $\rho$ on $[N,N]_{\textup{R}}$ and $[N,N]_{\textup{L}}$ are not distal. Since the quotient group $\Delta/N$ is hyperbolic, by Theorem \ref{main2} and Proposition \ref{undist} there exist constants $C,C',c,c'>0$ such that \begin{align*} \big| \big| \mu (\rho(\delta n,\delta))\big|\big|_{\mathbb{E}} \geq c\big(|\delta n|_{\Delta}+|n|_{\Delta}\big)-C \geq c'\big|(\delta n,\delta)\big|_{\Delta \times_N \Delta}-C' \end{align*} for every $\delta \in \Delta$ and $n\in N$. In particular, $\rho$ is a quasi-isometric embedding.  Hence $\textup{(i)}\Rightarrow \textup{(iii)}$ follows.\end{proof}

\section{Further properties of the examples} \label{add}

In this section we provide some further properties of the examples constructed in this paper. For a finitely generated group $\mathsf{H}$ its first Betti number, denoted by $b_1(\mathsf{H})$, is the free rank of the finitely generated abelian group $\mathsf{H}/[\mathsf{H},\mathsf{H}]$. 

First we explain that there exist infinitely many isomorphism classes of examples in $\Delta \times \Delta$ with positive first Betti number satisfying the conclusion of Theorem \ref{main1}. 

\begin{proposition} \label{main3} Let $\Delta$ be a torsion-free cocompact lattice in $\mathsf{Sp}(m,1)$, $m \geq 2$, or $\textup{F}_4^{(-20)}$. There exist infinitely many non-isomorphic subgroups $P$ of $\Delta \times \Delta$ with positive first Betti number such that for $d \in \mathbb{N}$ every discrete faithful representation of $P$ into $\mathsf{SL}_d(\mathbb{R})$ is a quasi-isometric embedding. \end{proposition}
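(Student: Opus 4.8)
The plan is to realise the desired groups $P$ as fiber products $P_k:=\Delta\times_{N_k}\Delta$ for a carefully chosen family of infinite normal subgroups $N_k\trianglelefteq\Delta$ with $\Delta/N_k$ non-elementary hyperbolic. For any such $N_k$, Theorem~\ref{main1} applies to $P_k$: a discrete faithful representation $P_k\to\mathsf{SL}_d(\mathbb{R})$ has trivial (hence finite) kernel and is discrete, so by the equivalence $\textup{(ii)}\Leftrightarrow\textup{(iii)}$ it is a quasi-isometric embedding, for every $d\in\mathbb{N}$. It then remains to arrange that $b_1(P_k)>0$ and that the $P_k$ fall into infinitely many isomorphism classes; for the latter it suffices to show $b_1(P_k)\to\infty$, since the first Betti number is an isomorphism invariant of a finitely generated group.

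\emph{Construction of $N_k$.} Being a torsion-free cocompact lattice in a rank one Lie group, $\Delta$ is the fundamental group of a closed negatively curved manifold, hence a torsion-free Gromov hyperbolic group. For each $k\geq 1$ I would fix elements $g_1,\dots,g_k\in\Delta$ in sufficiently general position (pairwise independent elements of infinite order, not proper powers, so that each generates a malnormal maximal cyclic subgroup) and, following Gromov~\cite{Gromov}, Olshanskii~\cite{Ol} and Delzant~\cite{Delzant}, choose $n$ large enough that $N_k:=\langle\langle g_1^{\,n},\dots,g_k^{\,n}\rangle\rangle$ has non-elementary hyperbolic quotient $\Delta/N_k$ and such that the pair $(\Delta,N_k)$ enjoys the Cohen--Lyndon property in the sense of \cite{Sun}, exactly as in \cite{MM}. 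Since $\Delta$ is torsion-free, $g_1^{\,n}$ has infinite order, so $N_k$ is infinite; thus Theorem~\ref{main1} applies to $P_k$.

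\emph{The Betti number.} The projection $(\delta n,\delta)\mapsto\delta$ gives a short exact sequence $1\to N_k\to P_k\to\Delta\to 1$ with kernel $N_k\times\{1\}$, whose five-term exact sequence in rational homology reads
\[
H_2(\Delta;\mathbb{Q})\longrightarrow H_1(N_k;\mathbb{Q})_{\Delta}\longrightarrow H_1(P_k;\mathbb{Q})\longrightarrow H_1(\Delta;\mathbb{Q})\longrightarrow 0,
\]
where the subscript $\Delta$ denotes coinvariants for the conjugation action, which factors through $\Delta/N_k$. Since $\Delta$ has Kazhdan's property $(T)$ we have $H_1(\Delta;\mathbb{Q})=0$, so $b_1(P_k)=\dim_{\mathbb{Q}}\operatorname{coker}\!\big(H_2(\Delta;\mathbb{Q})\to H_1(N_k;\mathbb{Q})_{\Delta}\big)$. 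By the Cohen--Lyndon property, $N_k$ is a free product of conjugates of the infinite cyclic groups $\langle g_i^{\,n}\rangle$ forming one $\Delta/N_k$-orbit of conjugates for each $i$; hence $H_1(N_k;\mathbb{Q})$ is, as a $\mathbb{Q}[\Delta/N_k]$-module, a direct sum of $k$ permutation modules $\mathbb{Q}[(\Delta/N_k)/\langle\bar g_i\rangle]$, so its coinvariants have dimension $k$. Therefore $b_1(P_k)\geq k-\dim_{\mathbb{Q}}H_2(\Delta;\mathbb{Q})$. As $\Delta$ is finitely presented, $\dim_{\mathbb{Q}}H_2(\Delta;\mathbb{Q})$ is a fixed finite number, so $b_1(P_k)>0$ for all large $k$ and $b_1(P_k)\to\infty$; hence $\{P_k\}_{k\gg 1}$ is an infinite family of pairwise non-isomorphic subgroups of $\Delta\times\Delta$ with the required properties.

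The step that requires care is the construction: one must choose the relators $g_i^{\,n}$ so that $\Delta/N_k$ is \emph{non-elementary} hyperbolic (Theorem~\ref{main1} needs non-elementarity, not merely hyperbolicity) while simultaneously retaining the Cohen--Lyndon property and a transparent description of $H_1(N_k)$ as a $\mathbb{Q}[\Delta/N_k]$-module with $k$ permutation summands. This is precisely the combination supplied by \cite{Sun} together with the argument of \cite{MM}; granting it, the homological computation and the appeal to Theorem~\ref{main1} are routine.
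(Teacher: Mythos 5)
Your proof is correct in outline, but it takes a genuinely different route from the paper's for the ``infinitely many isomorphism classes'' part. The paper keeps to single-relator normal subgroups $\langle\langle f_q^{k_q}\rangle\rangle$, gets $b_1>0$ by applying Proposition~\ref{MM} directly (the surjection onto $N/[L,N]\cong\mathbb{Z}$), and then rules out isomorphisms among the fiber products via a compactness argument: it builds an auxiliary nested chain of non-elementary hyperbolic quotients $\Delta/N_q$ with $N_q=\langle\langle f_1^{k_1},\dots,f_q^{k_q}\rangle\rangle$, and if infinitely many $\Delta_q=\Delta/\langle\langle f_q^{k_q}\rangle\rangle$ were isomorphic one would get, via Paulin--Bestvina and Property~(T), an eventual coincidence of epimorphisms forcing $f_{s_q}^{k_{s_q}}\in\langle\langle f_{s_r}^{k_{s_r}}\rangle\rangle$, contradicting the chain construction; it then descends to the fiber products via the Bridson--Grunewald centralizer characterization of $\mathsf{N}\times\mathsf{N}$. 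You instead let the number of relators grow and argue that $b_1(P_k)\to\infty$, so the groups are automatically pairwise non-isomorphic with no need for the Paulin--Bestvina or Bridson--Grunewald steps. That is cleaner, but it does lean on a strictly stronger input than the paper cites: you need Sun's Cohen--Lyndon theorem for a \emph{collection} of $k$ hyperbolically embedded cyclic subgroups, whereas the paper only quotes the single-subgroup case (Theorem~\ref{LC}). That version does exist in \cite{Sun}, but you should make the hypothesis explicit: one must choose primitive $g_1,\dots,g_k$ pairwise non-conjugate (and non-conjugate to each other's inverses) so that $\{\langle g_i\rangle\}_{i=1}^k$ is hyperbolically embedded in $\Delta$, and then take $n$ deep enough for both non-elementary hyperbolicity of $\Delta/N_k$ and the multi-subgroup Cohen--Lyndon property. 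You should also note why $H_1(N_k;\mathbb{Q})$ is a genuine (unsigned) permutation $\mathbb{Q}[\Delta/N_k]$-module: since $\Delta$ is torsion-free hyperbolic, the normalizer of each $\langle g_i\rangle$ coincides with its centralizer $\langle g_i\rangle$, so no element can invert $g_i^n$, and the stabilizer of each free factor acts trivially on its $H_1$; that is what makes the coinvariants exactly $k$-dimensional. With those two points made explicit, the five-term exact sequence argument (using $H_1(\Delta;\mathbb{Q})=0$ from Property~(T)) and the conclusion $b_1(P_k)\geq k-\dim_{\mathbb{Q}}H_2(\Delta;\mathbb{Q})\to\infty$ are correct, and indeed give slightly more than the paper claims, namely fiber products with arbitrarily large first Betti number.
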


The first key tool that we need is the following theorem about the existence of quotients of hyperbolic groups which are non-elementary hyperbolic. Let us recall that for a group $\Gamma$ and a finite subset $\mathcal{F}$ of $\Gamma$ we set $\llangle \mathcal{F}\rrangle =\big \langle \{ gfg^{-1}:g \in \Gamma, f \in \mathcal{F}\}\big\rangle$.

\begin{theorem} \textup{(}\cite{Gromov, Ol,Delzant}\textup{)}\label{quotient} Let $\Gamma$ be a non-elementary hyperbolic group and $w \in \Gamma$ be an infinite order element. Then the quotient $\Gamma/\llangle w^m \rrangle$ is non-elementary hyperbolic for all but finitely many $m\in \mathbb{N}$.\end{theorem}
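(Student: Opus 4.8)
The plan is to deduce this from the theory of small cancellation over hyperbolic groups of Gromov \cite{Gromov} and Delzant \cite{Delzant} (see also Olshanskii \cite{Ol}). First I would reduce to the case that $w$ is not a proper power: if $w=u^{k}$ then $\langle\langle w^{m}\rangle\rangle=\langle\langle u^{km}\rangle\rangle$, so $\{\Gamma/\langle\langle w^{m}\rangle\rangle\}_{m\in\mathbb{N}}$ is a subfamily of $\{\Gamma/\langle\langle u^{j}\rangle\rangle\}_{j\in\mathbb{N}}$ and it suffices to treat the latter. Next I would fix a finite generating set of $\Gamma$ with $\delta$-hyperbolic Cayley graph $X$, denote by $E(w)\leq\Gamma$ the maximal elementary (virtually cyclic) subgroup containing $w$ --- equivalently, the setwise stabilizer in $\Gamma$ of the pair of fixed points $\{w^{+\infty},w^{-\infty}\}\subset\partial X$ --- and fix a quasi-axis $A$ of the loxodromic isometry $w$, whose stable translation length $\ell(w)=\lim_{n\to\infty}|w^{n}|_{\Gamma}/n$ is positive.

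The crucial step, and the one I expect to be the main obstacle, is the geometric estimate which makes $w^{m}$ a ``long and thin'' relator: there is a constant $D=D(\Gamma,w)>0$ such that for every $g\in\Gamma\smallsetminus E(w)$ the quasi-axes $A$ and $gA$ fellow-travel along a subsegment of $X$ of length at most $D$ --- otherwise $A$ and $gA$ would lie at finite Hausdorff distance, forcing $g$ to fix $\{w^{+\infty},w^{-\infty}\}$. Granting this, and using $|w^{m}|_{\Gamma}\geq\ell(w)\,m$ together with the fact that $w$ is not a proper power (so that among cyclic conjugates of $w^{\pm m}$ the only long overlaps are the obvious periodic ones, of length a bounded multiple of $\ell(w)$; the torsion in $E(w)$ is handled as in \cite{Delzant}), one gets that for every $\lambda>0$ there is $m_{0}(\lambda)$ such that the one-relator system $\{w^{m}\}$ satisfies the small cancellation hypothesis $C'(\lambda)$ over $\Gamma$ for all $m\geq m_{0}(\lambda)$. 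Taking $\lambda$ below the absolute threshold of \cite[Theorem I]{Delzant}, I would conclude that $Q_{m}:=\Gamma/\langle\langle w^{m}\rangle\rangle$ is hyperbolic for all $m\geq m_{0}$, hence for all but finitely many $m\in\mathbb{N}$.

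Finally I would check that $Q_{m}$ is non-elementary for $m$ large --- which is again part of the small cancellation package of \cite{Gromov,Delzant,Ol}, being exactly what underlies the SQ-universality of non-elementary hyperbolic groups. Concretely, I would show that a suitable rank-two free subgroup of $\Gamma$ survives in $Q_{m}$. Since $\Gamma$ is non-elementary while $E(w)$ is virtually cyclic, $\Gamma$ contains a quasiconvex free subgroup $F=\langle a,b\rangle$ of rank $2$ such that $F\cap\gamma E(w)\gamma^{-1}$ is finite, hence trivial, for all $\gamma\in\Gamma$ (produce it by ping-pong from loxodromic elements whose maximal elementary subgroups are not conjugate to $E(w)$). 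By the bounded intersection property of quasiconvex subgroups of a hyperbolic group there is $D'=D'(F,w)>0$ such that no geodesic of $X$ can stay simultaneously in a bounded neighbourhood of $F$ and in a bounded neighbourhood of some translate $\gamma A$ for a length exceeding $D'$. On the other hand, the Greendlinger-type lemma accompanying \cite[Theorem I]{Delzant} guarantees that any nontrivial element of $\langle\langle w^{m}\rangle\rangle$ has a geodesic representative containing a subpath of length at least $(1-3\lambda)|w^{m}|_{\Gamma}$ that lies uniformly close to a $\Gamma$-translate of $A$. Since a geodesic representing an element of $F$ stays in a bounded neighbourhood of $F$ by quasiconvexity, choosing $m$ large enough that $(1-3\lambda)\,\ell(w)\,m>D'$ forces $F\cap\langle\langle w^{m}\rangle\rangle=\{1\}$; hence $F$ embeds into $Q_{m}$, so $Q_{m}$ contains a non-abelian free group and, being hyperbolic, is non-elementary. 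Enlarging $m_{0}$ to absorb the remaining finitely many values of $m$ completes the argument. (Alternatively, the entire argument can be packaged as an instance of group-theoretic Dehn filling.)
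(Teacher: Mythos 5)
The paper states this result as a theorem of Gromov and Delzant and gives no proof of its own, so there is no in-paper argument to compare against. Your sketch correctly reproduces the standard small-cancellation-over-hyperbolic-groups argument underlying the cited references: bounded fellow-travelling of distinct translates of the quasi-axis gives $C'(\lambda)$ for $w^m$ once $m$ is large, Delzant's Theorem I then yields hyperbolicity of the quotient, and non-elementarity follows from the survival of a quasiconvex rank-two free subgroup (equivalently, by viewing the whole construction as a group-theoretic Dehn filling).
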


Let $\Lambda$ be a group, $L$ be a subgroup of $\Lambda$ and $N$ be a normal subgroup of $L$. The triple $(\Lambda,L,N)$ satisfies the {\em Cohen--Lyndon property} if there exists a set $T$ of left coset representatives of $\llangle N\rrangle L$ in $\Lambda$ such that $$ \llangle N \rrangle= \big \langle \{ tNt^{-1}:t \in T\} \big\rangle= \bigast_{t\in T}tNt^{-1}.$$ Sun in \cite{Sun} established a Cohen--Lyndon type theorem for any group $\Lambda$ and any hyperbolically embedded subgroup $L$ of $\Lambda$. We need the following special case of \cite[Thm. 2.5]{Sun} for maximal cyclic subgroups of torsion-free hyperbolic groups.

\begin{theorem} \textup{(}\cite{Sun}\textup{)} \label{LC} Let $\Gamma$ be a non-elementary torsion-free word hyperbolic group and $\langle w \rangle$ be an infinite maximal cyclic subgroup of $\Gamma$. Then for all but finitely many $n\in \mathbb{N}$ the triple $(\Gamma, \langle w \rangle, \langle w^n \rangle )$ has the Cohen--Lyndon property. \end{theorem}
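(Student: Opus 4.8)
The plan is to deduce Theorem~\ref{LC} from Sun's general Cohen--Lyndon theorem \cite[Theorem 2.5]{Sun}. That theorem applies to a hyperbolically embedded subgroup $H \hookrightarrow_h \Gamma$ (more generally, to a finite family of such subgroups) and produces, from the data of the hyperbolically embedded structure, a finite subset $\mathcal{F} \subseteq H \smallsetminus \{1\}$ with the following property: for every normal subgroup $N \trianglelefteq H$ satisfying $N \cap \mathcal{F} = \emptyset$, the triple $(\Gamma, H, N)$ has the Cohen--Lyndon property, i.e.\ $\langle \langle N \rangle \rangle = \ast_{t \in T} tNt^{-1}$ for a suitable set $T$ of left coset representatives of $\langle \langle N \rangle \rangle H$ in $\Gamma$. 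The task therefore reduces to two points: (a) recognising a maximal cyclic subgroup of $\Gamma$ as such an $H$; and (b) checking that $N \cap \mathcal{F} = \emptyset$ for $N = \langle w^n \rangle$ once $n$ is large.

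For (a) I would invoke the structure theory of loxodromic elements. Since $\Gamma$ is hyperbolic and torsion-free and $w$ has infinite order, $w$ acts loxodromically on the Cayley graph of $\Gamma$, so its maximal elementary subgroup $E_{\Gamma}(w)$ is well defined and is hyperbolically embedded in $\Gamma$ (Dahmani--Guirardel--Osin). As $\Gamma$ is torsion-free, $E_{\Gamma}(w)$ is a torsion-free virtually cyclic group, hence infinite cyclic; since it contains $\langle w \rangle$ and the latter is maximal cyclic, we get $E_{\Gamma}(w) = \langle w \rangle$. Thus $\langle w \rangle \hookrightarrow_h \Gamma$, and Sun's theorem applies to the one-element family $\{\langle w \rangle\}$, producing a finite subset $\mathcal{F} \subseteq \langle w \rangle \smallsetminus \{1\}$ as above.

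For (b), since $w$ has infinite order the map $k \mapsto w^k$ is injective, so $\{\, k \in \mathbb{Z} \smallsetminus \{0\} : w^k \in \mathcal{F} \,\}$ is finite; let $n_0$ be larger than the absolute value of each of its elements. If $n \geq n_0$, then every nontrivial element of $\langle w^n \rangle$ has the form $w^{kn}$ with $k \neq 0$, so $|kn| \geq n \geq n_0$, and hence $w^{kn} \notin \mathcal{F}$; thus $\langle w^n \rangle \cap \mathcal{F} = \emptyset$, and the triple $(\Gamma, \langle w \rangle, \langle w^n \rangle)$ has the Cohen--Lyndon property. This proves the theorem, with ``all but finitely many'' meaning ``$n \geq n_0$''.

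The counting in step (b) is routine, so the real content is step (a): one must make sure that a maximal cyclic subgroup of a torsion-free hyperbolic group genuinely is one of the subgroups to which \cite[Theorem 2.5]{Sun} applies. This is exactly where torsion-freeness and maximality are used --- they identify $\langle w \rangle$ with $E_{\Gamma}(w)$, the subgroup known to be hyperbolically embedded. One should also check that whatever form the ``sufficiently deep'' hypothesis on $N$ takes in \cite{Sun} --- e.g.\ stated through the relative metric $\widehat{d}_{\langle w \rangle}$ on $\langle w \rangle$, which is a proper metric, so that balls of finite radius are finite --- is indeed equivalent to avoiding a fixed finite subset $\mathcal{F} \subseteq \langle w \rangle \smallsetminus \{1\}$, and therefore to taking $n$ large.
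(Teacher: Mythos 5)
The paper does not give its own proof of Theorem~\ref{LC}: it simply cites it as the special case of \cite[Theorem 2.5]{Sun} for maximal cyclic subgroups of torsion-free hyperbolic groups. Your proposal supplies exactly the derivation that the paper leaves implicit, and it is correct. The two ingredients you isolate are indeed the right ones: (a) in a torsion-free hyperbolic group, a torsion-free virtually cyclic group is infinite cyclic, so the maximal elementary subgroup $E_{\Gamma}(w)$ of the loxodromic element $w$ is infinite cyclic; maximality of $\langle w \rangle$ then forces $E_{\Gamma}(w) = \langle w \rangle$, and $E_{\Gamma}(w)$ is hyperbolically embedded by Dahmani--Guirardel--Osin (or, more classically, because infinite cyclic subgroups generated by primitive elements are almost malnormal and quasiconvex in a hyperbolic group, which is equivalent to being hyperbolically embedded); (b) Sun's ``sufficiently deep'' hypothesis is phrased as avoiding a finite exceptional subset of the hyperbolically embedded subgroup, and since $k \mapsto w^{k}$ is injective, $\langle w^{n} \rangle$ avoids any fixed finite subset of $\langle w \rangle \smallsetminus \{1\}$ once $n$ is large enough. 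This matches the intent of the paper's citation; there is no discrepancy to report.
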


Mj--Mondal in \cite{MM} proved the following proposition in order to establish sufficient conditions so that certain fiber products do not have Property (T).

\begin{proposition} \textup{(}\cite[Prop. 3.6]{MM}\textup{)} \label{MM} Let $\Lambda$ be a group, $L$ be a subgroup of $\Lambda$ and $N$ be a normal subgroup of $L$. Suppose that the triple $(\Lambda,L,N)$ satisfies the Cohen--Lyndon property. Then there exists a surjective group homomorphism $$\phi: \llangle N  \rrangle \slash [ \llangle N \rrangle, \Lambda] \twoheadrightarrow N/[L,N].$$\end{proposition}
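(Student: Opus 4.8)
The plan is to build the homomorphism $\phi$ directly out of the free product decomposition furnished by the Cohen--Lyndon property, and then to show that it kills $[\langle \langle N \rangle \rangle, \Lambda]$. First I would fix a set $T$ of left coset representatives of $\langle \langle N \rangle \rangle L$ in $\Lambda$ witnessing the Cohen--Lyndon property, so that $\langle \langle N \rangle \rangle = \ast_{t \in T} tNt^{-1}$. For each $t \in T$, conjugation by $t^{-1}$ identifies $tNt^{-1}$ with $N$, and composing with the projection $N \twoheadrightarrow N/[L,N]$ gives a homomorphism $\phi_t \colon tNt^{-1} \to N/[L,N]$, $\phi_t(tnt^{-1}) = \bar n$. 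By the universal property of free products the family $(\phi_t)_{t \in T}$ assembles into a single homomorphism $\Phi \colon \langle \langle N \rangle \rangle \to N/[L,N]$. Since $N \subseteq L$, the group $N/[L,N]$ is abelian, so there is no compatibility constraint between the $\phi_t$; and already the restriction of $\Phi$ to any single free factor $tNt^{-1}$ surjects onto $N/[L,N]$, so $\Phi$ is surjective. It then remains only to prove $[\langle \langle N \rangle \rangle, \Lambda] \subseteq \ker \Phi$, for then $\Phi$ descends to the desired surjection $\phi \colon \langle \langle N \rangle \rangle / [\langle \langle N \rangle \rangle, \Lambda] \twoheadrightarrow N/[L,N]$.

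The core of the argument is to show that $\Phi$ is invariant under the conjugation action of $\Lambda$ on $\langle \langle N \rangle \rangle$, i.e. $\Phi(g^{-1} x g) = \Phi(x)$ for all $x \in \langle \langle N \rangle \rangle$, $g \in \Lambda$. As $\langle \langle N \rangle \rangle$ is generated by the elements $tnt^{-1}$ ($t \in T$, $n \in N$) and both sides are homomorphisms in $x$, it suffices to treat $x = tnt^{-1}$. Here I would write $h := g^{-1}t \in \Lambda$ and use the defining property of the transversal to express $h = t' m l$ with $t' \in T$, $m \in \langle \langle N \rangle \rangle$ and $l \in L$. Since $N \trianglelefteq L$ we may replace $l n l^{-1}$ by an element $n' \in N$; and since $\langle \langle N \rangle \rangle \trianglelefteq \Lambda$ the element $\mu := t' m (t')^{-1}$ lies in $\langle \langle N \rangle \rangle$. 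A short computation then gives $g^{-1}(tnt^{-1})g = h n h^{-1} = \mu \big(t' n' (t')^{-1}\big) \mu^{-1}$. Applying $\Phi$, using that $N/[L,N]$ is abelian and that $t'n'(t')^{-1}$ sits in the free factor indexed by $t'$, this collapses to $\Phi(t'n'(t')^{-1}) = \overline{n'}$, and finally $\overline{n'} = \bar n$ because $l n l^{-1} n^{-1}$ is a commutator of an element of $L$ with an element of $N$, hence lies in $[L,N]$. Thus $\Phi(g^{-1}(tnt^{-1})g) = \bar n = \Phi(tnt^{-1})$.

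With $\Lambda$-invariance in hand it follows formally that $\Phi([x,g]) = \Phi(x)^{-1}\Phi(g^{-1}xg) = 1$ for all $x \in \langle \langle N \rangle \rangle$ and $g \in \Lambda$, so $[\langle \langle N \rangle \rangle, \Lambda] \subseteq \ker \Phi$ and we obtain the surjection $\phi$ by passing to the quotient. I expect the one genuinely delicate point to be the second paragraph: one must be careful that the universal property of the free product really does produce a well-defined $\Phi$, and, more importantly, must track precisely how conjugation by an arbitrary element of $\Lambda$ permutes and twists the free factors $tNt^{-1}$ — this is exactly the place where the transversal structure of $T$ and the hypothesis $N \trianglelefteq L$ are used essentially. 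The rest of the proof is formal manipulation of commutator subgroups and quotients.
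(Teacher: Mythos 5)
Your proof is correct. A quick audit: the Cohen--Lyndon decomposition $\langle\langle N\rangle\rangle=\ast_{t\in T}tNt^{-1}$ does give, via the universal property of free products, a well-defined $\Phi:\langle\langle N\rangle\rangle\to N/[L,N]$ extending each $\phi_t(tnt^{-1})=\bar n$; the target is abelian because $[N,N]\subseteq[L,N]$; surjectivity is clear from any one factor; and your conjugation-invariance computation is the right mechanism for killing $[\langle\langle N\rangle\rangle,\Lambda]$. Writing $h=g^{-1}t=t'ml$ with $t'\in T$, $m\in\langle\langle N\rangle\rangle$, $l\in L$ (using that $\langle\langle N\rangle\rangle L$ is a subgroup since $\langle\langle N\rangle\rangle\trianglelefteq\Lambda$), you correctly get $hnh^{-1}=\mu\,(t'n'(t')^{-1})\,\mu^{-1}$ with $\mu=t'm(t')^{-1}\in\langle\langle N\rangle\rangle$ and $n'=lnl^{-1}\in N$; applying $\Phi$, the inner automorphism vanishes in the abelian target and $\overline{n'}=\bar n$ since $[l,n]\in[L,N]$. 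This is exactly the argument that makes the statement work; I see no gaps. One cosmetic remark: your aside that ``there is no compatibility constraint between the $\phi_t$'' is true but beside the point --- the universal property of free products never imposes one regardless of whether the target is abelian, so abelianness is used only in the conjugation-invariance step, not in building $\Phi$.

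Note that the paper itself does not supply a proof of this proposition; it is cited verbatim from Mj--Mondal. So there is no ``paper's proof'' to compare against, but the route you took (build $\Phi$ on the free factors, then show $\Lambda$-conjugation invariance so that $[\langle\langle N\rangle\rangle,\Lambda]$ dies) is the natural and, as far as I can tell, the standard one for this statement.
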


We will need the following consequence of the compactness theorem, see \cite[p. 340]{Paulin} and \cite[Thm. 3.9]{Bestvina}, and the fact that every isometric action of a group with Property (T) on a real tree has a globally fixed point.

\begin{proposition}\textup{(\cite{Paulin, Bestvina})} Let $\Gamma_1$ be a finitely generated group with Property (T) and $\Gamma_2$ be a hyperbolic group. Suppose that $\big\{\varphi_n:\Gamma_1\rightarrow \Gamma_2\big\}_{n\in \mathbb{N}}$ is a sequence of group homomorphisms. There exists $r\in \mathbb{N}$, a subsequence $(\varphi_{m_n})_{n\in \mathbb{N}}$ and $(\gamma_n)_{n\in \mathbb{N}}\subset \Gamma_2$ \hbox{such that for every $\delta\in \Gamma_1$ and $n\in \mathbb{N}$:} $$\varphi_{m_n}(\delta)=\gamma_n \varphi_{r}(\delta)\gamma_n^{-1}.$$ \end{proposition}

Now we can give the proof of Proposition \ref{main3}.

\begin{proof}[Proof of Proposition \ref{main3}.] Let $\langle f_1 \rangle$ be an infinite maximal cyclic subgroup of $\Delta$. By Theorem \ref{quotient} and Theorem \ref{LC} we may choose $k_1 \in \mathbb{N}$ such that $\Delta/ \langle \langle f_1^{k_1} \rangle \rangle$ is non-elementary hyperbolic and $(\Delta, \langle f_1 \rangle, \langle f_1^{k_1} \rangle)$ has the Cohen--Lyndon property. Observe that the quotient of $\Delta \times _{\llangle f_1^{k_1} \rrangle} \Delta$ by the normal subgroup $\Delta \times _{[\Delta,\llangle f_1^{k_1} \rrangle]} \Delta$ is isomorphic to $\llangle f_1^{k_1} \rrangle/ [\Delta,\llangle f_1^{k_1} \rrangle]$ and hence by Proposition \ref{MM} we have $b_1\big(\Delta \times_{\llangle f_1^{k_1} \rrangle} \Delta\big)>0$. Let $N_1:=\llangle f_1^{k_1} \rrangle$ and $\Delta_1:=\Delta/\llangle f_1^{k_1} \rrangle$. Next, we choose $\langle f_2 N_1 \rangle$ a maximal cyclic infinite subgroup of $\Delta/N_1$. Note that $\langle f_2 \rangle$ is an infinite maximal cyclic subgroup of $\Delta$, hence by Theorem \ref{quotient}, Theorem \ref{LC} and Proposition \ref{MM} we may choose $k_2 \in \mathbb{N}$ such that $\Delta_2:=\Delta/\langle \langle f_2^{k_2} \rangle \rangle$ and $\Delta/N_2$, $N_2:=\llangle f_1^{k_1},f_2^{k_2} \rrangle$, are non-elementary hyperbolic and $b_1\big(\Delta \times_{\llangle f_2^{k_2} \rrangle} \Delta\big)>0$. By continuing similarly, we obtain a sequence of elements $(f_q)_{q \in \mathbb{N}}$ of $\Delta$ and integers $(k_q)_{q \in \mathbb{N}}$ such that:

\medskip

\noindent \textup{(i)} For every $q \in \mathbb{N}$, the quotient $\Delta/N_q$, $N_{q}= \llangle f_{1}^{k_1},\dots, f_{q}^{k_q} \rrangle$, is non-elementary hyperbolic.

\noindent \textup{(ii)} For $q<p$, $\langle f_{p}N_q \rangle$ is an infinite maximal cyclic subgroup of $\Delta/N_{q}$. In particular, $\langle f_q \rangle$ is a maximal cyclic subgroup of $\Delta$.

\noindent \textup{(iii)} For every $q \in \mathbb{N}$, $\Delta_q:=\Delta/\llangle f_{q}^{k_q} \rrangle$ is a non-elementary hyperbolic group and \hbox{$b_1 \big(\Delta \times_{\llangle f_q^{k_q}\rrangle}\Delta \big)>0$.}

\medskip
We claim that for every $q_0\in \mathbb{N}$ there exist finitely many $q\in \mathbb{N}$ such that $\Delta_q$ is isomorphic to $\Delta_{q_0}$. Suppose that this does not happen, i.e. there exists an infinite sequence $(s_q)_{q \in \mathbb{N}}$ and isomorphisms $\phi_{s_q}:\Delta_{s_q} \rightarrow \Delta_{q_0}$. Let $\pi_{s_q}:\Delta \twoheadrightarrow \Delta_{s_q}$ be the projection with kernel $\llangle f_{s_q}^{k_{s_q}} \rrangle$. In particular, we obtain a sequence of surjective group homomorphisms $\phi_{s_q}\circ \pi_{s_q}: \Delta \twoheadrightarrow \Delta_{q_0}$. 

Let us set $\mathsf{N}_{s_q}:=\llangle f_{s_q}^{k_{s_q}}\rrangle$ for $q \in \mathbb{N}$. It follows by the previous proposition that, up to passing to a subsequence, there exists a sequence $(g_q)_{q\in \mathbb{N}}$ of elements in $\Delta_{q_0}$ and $r \in \mathbb{N}$ such that $$\phi_{s_q}(\pi_{s_q}(g))=g_q \phi_{s_{r}}(\pi_{s_r}(g))g_{q}^{-1}$$ for every $q \in \mathbb{N}$ and $g \in \Delta$. In particular, since $\phi_{s_r}$ is injective, for $g:=f_{s_q}^{k_{s_q}}$ and $q\in \mathbb{N}$ large enough, we have $\pi_{s_r}(f_{s_q}^{k_{s_q}})=1$ or equivalently $f_{s_q}^{k_{s_q}}\in \llangle f_{s_r}^{k_{s_r}}\rrangle$. This is a contradiction since by construction (see (iii)) $f_{s_q}N_{s_r}$ generates an infinite maximal cyclic subgroup of $\Delta/\mathsf{N}_{s_r}$ for $q\in \mathbb{N}$ large enough. Therefore, we may pass to a subsequence, still denoted $\big\{\Delta_{s_q}\big \}_{q\in \mathbb{N}}$, such that $\Delta_{s_q}$ is not isomorphic to $\Delta_{s_p}$ \hbox{for $p \neq q$.}
\par By construction, for every $q \in \mathbb{N}$, $\Delta \times_{\mathsf{N}_{s_q}}\Delta$ has positive first Betti number and satisfies the conclusion of Theorem \ref{main1} since $\Delta/\mathsf{N}_{s_q}$ is hyperbolic. Now observe that  in the fiber product $\Delta \times_{\mathsf{N}_{s_p}}\Delta$ the subgroups $\{1\}\times \mathsf{N}_{s_p}$ and $ \mathsf{N}_{s_p}\times \{1\}$ are the only non abelian centralizers of non-cyclic non-trivial subgroups of $\Delta \times_{\mathsf{N}_{s_p}}\Delta$ (see \cite[\S 6]{Bridson-Grunewald}). Now suppose that there exists a group isomorphism $f_{q,p}:\Delta \times_{\mathsf{N}_{s_q}}\Delta \rightarrow \Delta \times_{\mathsf{N}_{s_p}}\Delta$. The previous observation shows $f_{q,p}(\mathsf{N}_{s_q}\times \mathsf{N}_{s_q})=\mathsf{N}_{s_p}\times \mathsf{N}_{s_p}$ \hbox{and since} $$\Delta \times_{\mathsf{N}_{s_i}} \Delta/\mathsf{N}_{s_i}\times \mathsf{N}_{s_i} \cong \Delta/\mathsf{N}_{s_i}\ i\in \{p,q\},$$ $f_{q,p}$ induces an isomorphism $f_{q,p}':\Delta/\mathsf{N}_{s_q} \rightarrow \Delta/\mathsf{N}_{s_p}$. Therefore, $p=q$. It follows that $\big\{\Delta \times_{\mathsf{N}_{s_p}}\Delta \big\}_{p \in \mathbb{N}}$ is an infinite sequence of pairwise non-isomorphic subgroups of $\Delta \times \Delta$.\end{proof}

We close this section by showing that that the fiber product of a hyperbolic group with respect to an infinite index infinite normal subgroup is not commensurable to a lattice in any semisimple group. The proof follows the strategy of the proof in \cite[Thm. 1.4 (d)]{Bass-Lubotzky} (see p. 1171--1172), however with certain modifications since $\mathsf{\Gamma}$ is not assumed to be a superrigid rank 1 lattice.

\begin{proposition} \label{nonlattice} Let $\mathsf{\Gamma}$ be a virtually torsion-free hyperbolic group and $\mathsf{N}$ be an infinite normal subgroup of $\mathsf{\Gamma}$ of infinite index. Suppose that $G_1,\ldots, G_{\ell}$ are connected simple algebraic groups defined over the local field $k_1,\ldots,k_{\ell}$ respectively. The fiber product $\mathsf{\Gamma}\times_{\mathsf{N}}\mathsf{\Gamma}$ is not commensurable to a lattice in the locally compact group ${\bf G}=G_1(k_1)\times \cdots \times G_{\ell}(k_{\ell})$.\end{proposition}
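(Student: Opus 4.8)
The plan is to argue by contradiction, assuming that $P := \mathsf{\Gamma}\times_{\mathsf{N}}\mathsf{\Gamma}$ is commensurable to a lattice $\Lambda$ in ${\bf G}=G_1(k_1)\times\cdots\times G_\ell(k_\ell)$. First I would pass to a finite-index subgroup: replacing $\mathsf{\Gamma}$ by a torsion-free finite-index subgroup and $\mathsf{N}$ by its intersection (which is still infinite, infinite-index, and normal in the new $\mathsf{\Gamma}$), I may assume $\mathsf{\Gamma}$ is torsion-free hyperbolic, so $P$ is torsion-free. Then commensurability means $P$ contains a finite-index subgroup $P_0$ that embeds as a finite-index subgroup of $\Lambda$, and by Margulis/Borel density considerations (together with the possibility of projecting to factors) one reduces to the case that the associated Zariski closure is all of ${\bf G}$ and $\Lambda$ is irreducible, or else $\Lambda$ splits as a product of irreducible lattices in subproducts.

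The key structural input is the presence of the two commuting copies $\mathsf{N}_{\textup{L}}=\mathsf{N}\times\{1\}$ and $\mathsf{N}_{\textup{R}}=\{1\}\times\mathsf{N}$ inside $P$, each infinite, each normal in $P$, with $\mathsf{N}_{\textup{L}}\cap\mathsf{N}_{\textup{R}}=\{1\}$ and with $C_P(\mathsf{N}_{\textup{L}})\supseteq \{1\}\times\mathsf{\Gamma}$ (the centralizer is in fact computed via the Bridson--Grunewald description as in \cite{Bridson-Grunewald}), so in particular $C_P(\mathsf{N}_{\textup{L}})$ is not virtually abelian — it contains the non-elementary hyperbolic group $\mathsf{\Gamma}$ modulo a central factor. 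Now I would run the standard dictionary for lattices in semisimple groups over local fields: an infinite normal subgroup $M$ of an irreducible lattice $\Lambda$ is of finite index (Margulis normal subgroup theorem when $\mathrm{rk}\,{\bf G}\geq 2$, and in rank one an infinite normal subgroup is either finite index or the lattice is not irreducible / is virtually free or surface-like — handled separately); in the reducible case a normal subgroup projects onto a normal subgroup of each factor-lattice and the same dichotomy applies factorwise. Combined with the fact that a finite-index subgroup of $\Lambda$ has finite center and that the centralizer in $\Lambda$ of a finite-index normal subgroup is finite, this forces $\mathsf{N}_{\textup{L}}\cap P_0$ to be finite index in $P_0$, hence $\mathsf{N}$ finite index in $\mathsf{\Gamma}$ — contradicting the infinite-index hypothesis. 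The only delicate point is the rank-one case where the normal subgroup theorem fails: here I would instead use that $\mathsf{N}_{\textup{L}}$ and $\mathsf{N}_{\textup{R}}$ are commuting, each infinite, each having large centralizer, which is incompatible with $P_0$ being virtually a rank-one lattice (such lattices are either virtually free / surface groups, whose infinite normal subgroups are finite index unless the quotient is infinite in which case the lattice is virtually free and one gets a different contradiction, or are hyperbolic with no such pair of commuting infinite normal subgroups by malnormality/acylindricity of the geometry).

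I expect the main obstacle to be organizing the case analysis cleanly across the product ${\bf G}=G_1(k_1)\times\cdots\times G_\ell(k_\ell)$: one must handle simultaneously higher-rank factors (where the normal subgroup theorem applies directly), rank-one factors over Archimedean fields, and rank-one factors over non-Archimedean fields (where $\Lambda$ may be virtually free and its infinite normal subgroups need not be finite index). The cleanest route, following the modifications alluded to in the statement to the argument of \cite[Theorem 1.4 (d)]{Bass-Lubotzky}, is to isolate the purely group-theoretic consequence one needs — namely that in a group commensurable to such a $\Lambda$, any infinite normal subgroup whose centralizer is not virtually solvable must have finite index — and to establish this by projecting to factors and invoking, factor by factor, either Margulis' normal subgroup theorem or (in rank one) the classification of normal subgroups together with the non-existence of two commuting infinite-normal subgroups with non-elementary centralizer. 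Then the hyperbolicity of $\mathsf{\Gamma}$ and the Bridson--Grunewald centralizer computation show $\mathsf{N}_{\textup{L}}$ has exactly the forbidden features unless $[\mathsf{\Gamma}:\mathsf{N}]<\infty$, which is the desired contradiction.
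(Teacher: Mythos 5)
Your approach is genuinely different from the paper's. The paper never invokes Margulis' normal subgroup theorem; instead it works directly with the simple factors $G_i(k_i)$. Using Borel density, it shows that for each $i$ the Zariski closures of $\mathrm{pr}_i(\mathsf{N}_{\textup{L}})$ and $\mathrm{pr}_i(\mathsf{N}_{\textup{R}})$ are normal in $G_i(k_i)$; since they commute and $G_i$ is simple, at least one is central, which partitions ${\bf G}$ as ${\bf G_1}\times {\bf G_2}$ with $\mathrm{pr}_i(\mathsf{N}_{\textup{L}})$ trivial on the first block and $\mathrm{pr}_i(\mathsf{N}_{\textup{R}})$ trivial on the second. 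It then shows $\rho_1$ is faithful on the diagonal, constructs the discrete overgroup $\mathsf{H}=\rho_1(\mathrm{diag})\times\rho_2(\mathrm{diag})$ in which $\rho(Q)$ must sit with finite index because $\rho(Q)$ is a lattice, and from this extracts a finite-index $\mathsf{\Gamma}'\leq\mathsf{\Gamma}$ with $\mathsf{\Gamma}'\subseteq\mathsf{N}$, contradicting infinite index. So the paper trades the lattice-theoretic classification you reach for (NST plus rank-one analysis) for an elementary ``doubling'' trick inside ${\bf G}$.

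Your proposal, as written, has two real problems. First, the parenthetical dichotomy you state for rank one — that an infinite normal subgroup of a rank-one lattice is finite index, or else the lattice is ``virtually free or surface-like'' — is simply false: the cocompact lattices $\Delta$ in $\mathsf{Sp}(m,1)$ or $\mathrm{F}_4^{(-20)}$ at the heart of this very paper have infinite normal subgroups of infinite index (with non-elementary hyperbolic quotient) and are neither virtually free nor surface-like. You do offer a better fallback for rank one (two commuting infinite normal subgroups are impossible in a hyperbolic/relatively hyperbolic group), and that idea is sound for cocompact factors, but it should replace rather than supplement the incorrect dichotomy, and the non-cocompact relatively hyperbolic case needs an actual argument rather than an appeal to ``malnormality/acylindricity.'' Second, the reduction ``$\Lambda$ irreducible, or else splits as a product of irreducible lattices and the dichotomy applies factorwise'' is left entirely vague: a normal subgroup of $\Lambda_1\times\Lambda_2$ need not be a product of normal subgroups, the partition of factors according to which of $\mathsf{N}_{\textup{L}},\mathsf{N}_{\textup{R}}$ becomes central still has to be made (this is exactly what Borel density does for the paper), and one must also justify that $\mathsf{\Gamma}'\times_{\mathsf{N}'}\mathsf{\Gamma}'$ actually inherits such a product structure before projecting factorwise. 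These are not cosmetic gaps: they are precisely the places where the paper's Borel-density computation and the construction of $\mathsf{H}$ do the work.

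The kernel of your idea — that $\mathsf{N}_{\textup{L}}$ is infinite, normal, of infinite index, and has centralizer $\mathsf{N}_{\textup{R}}$, and that this is incompatible with $P_0$ being a lattice — matches the paper's final contradiction. But the machinery you propose to extract it (NST plus a case split on the rank-one geometry) would need considerably more care than you give it, whereas the paper's route through Borel density on the algebraic factors and the doubling construction avoids both NST and any structure theory of rank-one lattices altogether.
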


\begin{proof} Suppose that $Q$ is a finite-index subgroup of $\mathsf{\Gamma}\times_{\mathsf{N}}\mathsf{\Gamma}$ which is a lattice in ${\bf G}$. Note that $Q$ contains a finite-index subgroup $Q_1$ which is normal in $\mathsf{\Gamma}\times_{\mathsf{N}}\mathsf{\Gamma}$. The intersection $Q_1\cap \textup{diag}(\mathsf{\Gamma}\times \mathsf{\Gamma})$ is a finite-index normal subgroup of $\textup{diag}(\mathsf{\Gamma}\times \mathsf{\Gamma})$ and is of the form $\textup{diag}(\mathsf{\Gamma}_1\times \mathsf{\Gamma}_1)$ for some finite-index normal subgroup $\mathsf{\Gamma}_1$ of $\mathsf{\Gamma}$. Thus $Q$ contains a finite-index subgroup of the form $\mathsf{\Gamma}_1\times_{\mathsf{N}_1}\mathsf{\Gamma}_1$, where $\mathsf{N}_1=\mathsf{N}\cap\mathsf{\Gamma}_1$ is of finite index in $\mathsf{N}$.

By the previous observation, without loss of generality, we may assume that $Q=\mathsf{\Gamma}\times_{\mathsf{N}}\mathsf{\Gamma}$ and that $\mathsf{\Gamma}$ is torsion-free. Let $\textup{pr}_i: Q\rightarrow G_i(k_i)$ denote the projection to the $i$-th coordinate for $1\leq i \leq \ell$. We may also assume that $\textup{pr}_i(Q)$ is not relatively compact. By Borel's density theorem \cite{Borel} (see also \cite[Cor. 3.2]{Dani}) the projection $\textup{pr}_i(Q)$ is Zariski dense in $G_i(k_i)$. Observe that the normalizer of the Zariski closure of $\textup{pr}_i(\mathsf{N}_{\textup{L}})$ (and $\textup{pr}_i(\mathsf{N}_{\textup{R}})$) in $G_i(k_i)$ is algebraic. Since $\textup{pr}_i(\mathsf{N}_{\textup{L}})$ and $\textup{pr}_i(\mathsf{N}_{\textup{R}})$ commute, either $\textup{pr}_i(\mathsf{N}_{\textup{L}})$ or $\textup{pr}_i(\mathsf{N}_{\textup{R}})$ is central. Moreover, up to passing to a finite index subgroup of $\mathsf{N}$, we may assume that for every $1 \leq i \leq \ell$ either $\textup{pr}_i(\mathsf{N}_{\textup{L}})$ or $\textup{pr}_i(\mathsf{N}_{\textup{R}})$ are trivial.

It follows that $\ell \geq 2$ and let ${\bf G_1}$ (resp. ${\bf G_2}$) be the product of the $G_i(k_i)$ such that $\textup{pr}_i(\mathsf{N}_{\textup{L}})$ (resp. $\textup{pr}_i(\mathsf{N}_{\textup{R}})$) is trivial. In particular, we obtain a discrete faithful representation $\rho:Q\xhookrightarrow{} {\bf G_1}\times {\bf G_2}$, $$\rho(\gamma, \gamma n)=\big(\rho_1(\gamma, \gamma n),\rho_2(\gamma, \gamma n)\big),(\gamma, \gamma n)\in Q,$$ such that $\rho(Q)$ is a lattice. Let us observe that the restriction of $\rho_1$ on $\textup{diag}(\mathsf{\Gamma}\times \mathsf{\Gamma})$ is faithful. Indeed, $\rho_1(\gamma,\gamma)$ is trivial for some $\gamma \in \Gamma$, then for every $n\in \mathsf{N}$ we have $\rho(\gamma, \gamma)\rho(1,n) \rho(\gamma, \gamma)^{-1}=\rho(1,n)$ since $\rho_2(1,n)$ is trivial by the definition of ${\bf G_2}$. Note that $\rho$ is faithful and hence $\gamma n\gamma^{-1}=n$. It follows that $\gamma$ is trivial since the centralizer of $\mathsf{N}$ in $\mathsf{\Gamma}$ is trivial.
\par Now let $\mathsf{H}=\rho_1(\textup{diag}(\mathsf{\Gamma \times \Gamma}))\times \rho_2(\textup{diag}(\mathsf{\Gamma \times \Gamma}))$ and observe that $\mathsf{H}$ is a discrete subgroup of ${\bf G_1}\times {\bf G_2}$. Indeed, if $(\gamma_r)_{r\in \mathbb{N}}$ and $(\delta_r)_{r \in \mathbb{N}}$ are sequences in $\mathsf{\Gamma}$ such that the sequence $g_{r}:=\big(\rho_1(\gamma_r,\gamma_r),\rho_2(\delta_r,\delta_r)\big)$ converges to $(1,1)$, then $\lim_{r } g_r \rho(n_1,n_2)g_{r}^{-1}=\lim_{r}\rho(\gamma_r n_1 \gamma_r^{-1}, \delta_r n_2 \delta_r^{-1})=\rho(n_1,n_2).$ Since $\rho$ is discrete, for all but finitely many $r \in \mathbb{N}$, $\gamma_r$ (resp. $\delta_r$) centralizes $n_1\in \mathsf{N}$ (resp. $n_2\in \mathsf{N}$). Recal that (since $\mathsf{\Gamma}$ is torsion-free) the centralizer of a maximal cyclic subgroup of $\mathsf{\Gamma}$ is cyclic and since $n_1,n_2\in \mathsf{N}$ were arbitrary and $\mathsf{N}$ is not cyclic, $(\gamma_r)_{r \in \mathbb{N}}$ and $(\delta_r)_{r \in \mathbb{N}}$ have to be eventually trivial. It follows that $\mathsf{H}$ is a discrete subgroup of ${\bf G_1}\times {\bf G_2}$. Since $\rho(Q)$ is assumed to be a lattice in ${\bf G_1}\times {\bf G_2}$, it has finite index in $\mathsf{H}$. In particular, there exists a finite index subgroup $\mathsf{\Gamma}'$ of $\mathsf{\Gamma}$ such that $\rho_1(\textup{diag}(\mathsf{\Gamma}'\times \Gamma'))\times \{1\}$ is a subgroup of $\rho(Q)$ centralizing the group $\rho(\mathsf{N}_{\textup{L}})$. The centralizer of $\rho(\mathsf{N}_{\textup{L}})$ in $\rho(Q)$ is $\rho(\mathsf{N}_{\textup{R}})$, hence for every $\gamma'\in \mathsf{\Gamma}'$ there exists $n'\in \mathsf{N}$ with $\rho_1(\gamma',\gamma')=\rho_1(1,n)=\rho_1(n,n)$, so $\gamma'=n$ since $\rho_1|_{\textup{diag}(\mathsf{\Gamma}'\times \Gamma')}$ is faithful. This contradicts the fact that $\mathsf{N}$ has infinite index in $\mathsf{\Gamma}$.\end{proof}

\end{document}